\begin{document}

\title{Existence and mass concentration of pseudo-relativistic Hartree equation}
\date{}
\maketitle

\vspace{ -1\baselineskip}

{\small
\begin{center}
{\sc  Jianfu Yang} \\
Department of Mathematics,
Jiangxi Normal University\\
Nanchang, Jiangxi 330022,
P.~R.~China\\
email: Jianfu Yang: jfyang\_2000@yahoo.com\\[10pt]
{\sc  Jinge Yang} \\
School of Sciences,
Nanchang Institute of Technology\\
Nanchang 330099,
P.~R.~China\\
email: Jinge Yang: jgyang2007@yeah.net\\[10pt]

\end{center}
}

\renewcommand{\thefootnote}{}
\footnote{Key words: pseudo-relativistic operator, minimizer, asymptotic behavior.}

\begin{quote}
{\bf Abstract.} In this paper, we investigate  the constrained minimization problem
\begin{equation}\label{eq:0.1}
e(a):=\inf_{\{u\in \mathcal{H},\|u\|_2^2=1\}}E_a(u),
\end{equation}
where the energy functional
\begin{equation} \label{eq:0.2}
E_a(u)=\int_{\mathbb{R}^3}(u\sqrt{-\Delta+m^2}\,u+Vu^2)\,dx
-\frac{a}{2}\int_{\mathbb{R}^3}(|x|^{-1}*u^2)u^2\,dx
\end{equation}
with $m\in \mathbb{R}$, $a>0$, is defined on a Sobolev space $\mathcal{H}$. We show that there exists a
threshold $a^*>0$ so that $e(a)$ is achieved if $0<a<a^*$, and has no minimizers if $a\geq a^*$. We also
investigate the asymptotic behavior of nonnegative minimizers of $e(a)$ as $a\to a^*$.
\end{quote}

\newcommand{\N}{\mathbb{N}}
\newcommand{\R}{\mathbb{R}}
\newcommand{\Z}{\mathbb{Z}}

\newcommand{\cA}{{\mathcal A}}
\newcommand{\cB}{{\mathcal B}}
\newcommand{\cC}{{\mathcal C}}
\newcommand{\cD}{{\mathcal D}}
\newcommand{\cE}{{\mathcal E}}
\newcommand{\cF}{{\mathcal F}}
\newcommand{\cG}{{\mathcal G}}
\newcommand{\cH}{{\mathcal H}}
\newcommand{\cI}{{\mathcal I}}
\newcommand{\cJ}{{\mathcal J}}
\newcommand{\cK}{{\mathcal K}}
\newcommand{\cL}{{\mathcal L}}
\newcommand{\cM}{{\mathcal M}}
\newcommand{\cN}{{\mathcal N}}
\newcommand{\cO}{{\mathcal O}}
\newcommand{\cP}{{\mathcal P}}
\newcommand{\cQ}{{\mathcal Q}}
\newcommand{\cR}{{\mathcal R}}
\newcommand{\cS}{{\mathcal S}}
\newcommand{\cT}{{\mathcal T}}
\newcommand{\cU}{{\mathcal U}}
\newcommand{\cV}{{\mathcal V}}
\newcommand{\cW}{{\mathcal W}}
\newcommand{\cX}{{\mathcal X}}
\newcommand{\cY}{{\mathcal Y}}
\newcommand{\cZ}{{\mathcal Z}}

\newcommand{\abs}[1]{\lvert#1\rvert}
\newcommand{\xabs}[1]{\left\lvert#1\right\rvert}
\newcommand{\norm}[1]{\lVert#1\rVert}

\newcommand{\loc}{\mathrm{loc}}
\newcommand{\p}{\partial}
\newcommand{\h}{\hskip 5mm}
\newcommand{\ti}{\widetilde}
\newcommand{\D}{\Delta}
\newcommand{\e}{\epsilon}
\newcommand{\bs}{\backslash}
\newcommand{\ep}{\emptyset}
\newcommand{\su}{\subset}
\newcommand{\ds}{\displaystyle}
\newcommand{\ld}{\lambda}
\newcommand{\vp}{\varphi}
\newcommand{\wpp}{W_0^{1,\ p}(\Omega)}
\newcommand{\ino}{\int_\Omega}
\newcommand{\bo}{\overline{\Omega}}
\newcommand{\ccc}{\cC_0^1(\bo)}
\newcommand{\iii}{\opint_{D_1}D_i}

\theoremstyle{plain}
\newtheorem{Thm}{Theorem}[section]
\newtheorem{Lem}[Thm]{Lemma}
\newtheorem{Def}[Thm]{Definition}
\newtheorem{Cor}[Thm]{Corollary}
\newtheorem{Prop}[Thm]{Proposition}
\newtheorem{Rem}[Thm]{Remark}
\newtheorem{Ex}[Thm]{Example}

\numberwithin{equation}{section}
\newcommand{\meas}{\rm meas}
\newcommand{\ess}{\rm ess} \newcommand{\esssup}{\rm ess\,sup}
\newcommand{\essinf}{\rm ess\,inf} \newcommand{\spann}{\rm span}
\newcommand{\clos}{\rm clos} \newcommand{\opint}{\rm int}
\newcommand{\conv}{\rm conv} \newcommand{\dist}{\rm dist}
\newcommand{\id}{\rm id} \newcommand{\gen}{\rm gen}
\newcommand{\opdiv}{\rm div}

\vskip 0.2cm \arraycolsep1.5pt
\newtheorem{Lemma}{Lemma}[section]
\newtheorem{Theorem}{Theorem}[section]
\newtheorem{Definition}{Definition}[section]
\newtheorem{Proposition}{Proposition}[section]
\newtheorem{Remark}{Remark}[section]
\newtheorem{Corollary}{Corollary}[section]

\section{Introduction}

In this paper, we investigate a minimization problem in connection with the pseudo-relativistic
Hartree equation
\begin{equation}\label{eq:1.1}
\sqrt{-\Delta+m^2}\,u+V(x)u= \mu u + a(|x|^{-1}*u^2)u\quad {\rm in}\quad \mathbb{R}^3,\quad  u\in H^{\frac{1}{2}}(\mathbb{R}^3),
\end{equation}
where $\mu\in \mathbb{R}$, and the operator $\sqrt{-\Delta+m^2}$ is defined on $H^{\frac 12}(\mathbb{R}^3)$ by the Fourier transform, that is,
for $u\in H^{\frac 12}(\mathbb{R}^3)$ we define
\[
\sqrt{-\Delta+m^2}\,u = \mathcal{F}^{-1}(\sqrt{|\xi|^2+m^2}\mathcal{F}u),
\]
in which $\mathcal{F}u$ stands for the Fourier transform of $u$. Problem \eqref{eq:1.1} arises in the study of solitary wave solutions $\psi(t,x)= e^{-i\mu t}u(x)$ of the Hartree equation
\begin{equation}\label{eq:1.2}
i\frac{\partial \psi}{\partial t} = \sqrt{-\Delta+m^2}\,\psi+V(x)\psi-a(|x|^{-1}*\psi^2)\psi\quad {\rm on}\quad \mathbb{R}^3.
\end{equation}
A consideration of problem \eqref{eq:1.2} in physics can be interpreted as a system of $N$ spinless, identical bosons with two-body interactions governed by the Coulomb potential. These bosons are also subject to a time-independent external potential $V(x)$, see \cite{FL} for more details. In the particular case $V(x)=-m$, problem \eqref{eq:1.2} was studied in \cite{ES} as an effective dynamical description for an N-body quantum system of relativistic bosons with two-body interaction given by Newtonian gravity, it leads to a Chandrasekhar type theory of boson stars. For solitary waves of problem \eqref{eq:1.2} with $V(x)=-m$, a ground state is a minimizer of the energy functional
\begin{equation}\label{eq:1.3a}
\mathcal{E}(u) = \frac 12\int_{\mathbb{R}^3}u(\sqrt{-\Delta+m^2}-m)u\,dx
-\frac{1}{4}\int_{\mathbb{R}^3}(|x|^{-1}*u^2)u^2\,dx
\end{equation}
constrained on
$$
\mathcal{N}(u) = \int_{\mathbb{R}^3}|u|^2\,dx = N,
$$
that is, a minimizer of the problem
\[
\mathcal{E}(N) = \inf\{\mathcal{E}(u): \mathcal{N}(u) = N\}.
\]
Any ground state $u$ satisfies
the semi-relativistic Hartree equation
\begin{equation}\label{eq:1.3}
(\sqrt{-\Delta+m^2}\,u - m) u - (|x|^{-1}*u^2)u = \mu u \quad {\rm in}\quad \mathbb{R}^3
\end{equation}
for some $\mu\in \mathbb{R}$. Equation \eqref{eq:1.3} appears in the study of models of stellar collapse, such as neutron stars. The typical neutron kinetic energy is high, so it must be treated relativistically, see \cite{L}, \cite{LT} and \cite{LY} for further discussion. It was found in \cite{FJL,LY} a symmetric decreasing ground state of $\mathcal{E}(u)$ subject to $0<N<N^*$ for some $N^*>0$; whereas no ground states exist whenever $N\geq N^*$.

Similar phenomenon also appears in quantum Bose gases. Recently, the mass concentration for Bose-Einstein condensates with attractive interaction was considered in \cite{GS} in $\mathbb{R}^2$. It proved in \cite{GS} that there exists a critical value $a^*>0$ such that the Gross-Pitaevskii energy functional
\[
E_a^{GP}(u) = \int_{\mathbb{R}^2}(|\nabla u|^2 + V(x)u^2)\,dx - \frac a 2\int_{\mathbb{R}^2}|u|^4\,dx
\]
with a "confining trap" $V$ and under the mass constraint $\int_{\mathbb{R}^2}u^2\,dx=1$, has at least one  minimizer if $0<a < a^*$, and has no minimizer if $a\geq a^*$. The limit behavior of the minimizer $u_{a_k}$ as $a_k\to a^*$ is also investigated. Similar problems were considered in \cite{DLS} for the fractional Laplacian $(-\Delta)^s$ and in \cite{HL} for Choquard equation.

In this paper, we consider  the minimization problem
\begin{equation}\label{eq:1.4}
e(a):=\inf_{\{u\in \mathcal{H},\|u\|_2^2=1\}}E_a(u),
\end{equation}
where the energy functional
\begin{equation} \label{eq:1.5}
E_a(u)=\int_{\mathbb{R}^3}(u\,\sqrt{-\Delta+m^2}\,u+Vu^2)\,dx
-\frac{a}{2}\int_{\mathbb{R}^3}(|x|^{-1}*u^2)u^2\,dx
\end{equation}
with $m\in \mathbb{R}$, $a>0$, is defined on the space
$$
\mathcal{H}:=\Big\{u\in H^{\frac{1}{2}}\big(\mathbb{R}^3\big)\Big|\int_{\mathbb{R}^3}Vu^2dx<\infty\Big\}
$$
equipped with the norm
$$\|u\|_{\mathcal{H}}=\Big\{\int_{\mathbb{R}^3}\Big[|(-\Delta)^{\frac{1}{4}}u|^2+(1+V)u^2\Big]dx\Big\}^{\frac{1}{2}}.$$

Stimulated by previous works, we will show that there exists a threshold $a^*>0$ such that the minimization problem $e(a)$ has a ground state if $0<a<a^*$ and has no ground state if $a\geq a^*$. Furthermore, we will study the collapse concentration of ground state $u_a$ as $a\to a^*$, which may help to understand better the structure formation of bosonic matter.

We assume in this paper that the function $V:\mathbb{R}^3\to \mathbb{R}$ is locally bounded and satisfies
\begin{equation}\label{eq:1.6}
V(x)\to \infty\quad {\rm as}\quad |x|\to \infty; \quad \inf_{x\in\mathbb{R}^3}V(x)=0.
\end{equation}
The threshold $a^*$ is related to $L^2(\mathbb{R}^3)$ norm $\|\cdot\|_2$ of a positive ground state $Q$ of the following problem
\begin{equation}\label{eq:1.7}
\sqrt{-\Delta}\,u+u-(|x|^{-1}*u^2)u=0\quad {\rm in}\quad \mathbb{R}^3,\quad u\in H^{\frac{1}{2}}(\mathbb{R}^3).
\end{equation}
Precisely,  $a^*:=\|Q\|_2^2$. It is known from \cite{FL1,LY} that, up to translations, problem \eqref{eq:1.7} admits a  positive ground state $Q$, which can be taken to be radially symmetric about the origin.  Moreover, every ground state $Q$ of \eqref{eq:1.7} satisfies $Q\in H^s(\mathbb{R}^3)$ for all $s\ge\frac12$ and
\begin{equation}\label{eq:1.10}
Q(x)=O(|x|^{-4})
\end{equation}
as $|x|\rightarrow \infty$.

In describing the formation of nonnegative minimizers of $e(a)$ as $a\to a^*$,  the uniqueness of the ground state $Q$ of \eqref{eq:1.7} has vital function. However, such a problem is still open for \eqref{eq:1.7} , this causes difficulty to manage the problem. We observe in Lemma \ref{lem:A3} in section 4 that every ground state of \eqref{eq:1.7} has the least $L^2(\mathbb{R}^3)$ norm among all nontrivial solutions of \eqref{eq:1.7}. Fortunately, this is sufficient to serve our purpose.

On the other hand, by the  Gagliardo-Nirenberg inequality
\begin{equation}\label{eq:1.7a}
\|u\|_q\le \|u\|_2^\theta\|(-\Delta)^{\frac{1}{4}}u\|_2^{1-\theta},
\end{equation}
for $u\in H^{\frac{1}{2}}(\mathbb{R}^3)$, where $2\le q \le 3,\, \theta=\frac{6}{q}-2$, and
the Hardy-Littlehood-Soblev inequality, we have
\begin{equation}\label{eq:1.8}
\int_{\mathbb{R}^3}(|x|^{-1}*u^2)u^2\,dx\leq C_{gn}\int_{\mathbb{R}^3}|(-\Delta)^{\frac{1}{4}}u|^2\,dx\int_{\mathbb{R}^3}u^2\,dx
\end{equation}
for $u\in H^{\frac{1}{2}}(\mathbb{R}^3)$. The optimal constant in inequality \eqref{eq:1.8} was determined in \cite{FL1}. Taking into account as \cite{W} the Weinstein functional
\begin{equation}\label{eq:1.8a}
I(u)=\frac{\int_{\mathbb{R}^3} u\sqrt{-\Delta}u\,dx\int_{\mathbb{R}^3}u^2\,dx}
{\int_{\mathbb{R}^3}(|x|^{-1}*u^2)u^2\,dx},
\end{equation}
we know from Lemma 5 in \cite{LenLew}  that
\begin{equation}\label{eq:1.9}
\frac{1}{2}\int_{\mathbb{R}^3}Q^2\,dx=\inf_{u\in H^{\frac{1}{2}}(\mathbb{R}^3),u\not\equiv0 } I(u),
\end{equation}
and then, the best constant $C_{gn}$ in \eqref{eq:1.8} is equal to $\frac 2{\|Q\|^2_2}$.

Our first result is as follows.

\begin{Theorem} \label{thm:1.1} Let $V\in L_{loc}(\mathbb{R}^3)$ satisfying $\eqref{eq:1.6}$. Then,

(i) if $0\le a<a^*:=\|Q\|_2^2$, there exists at least one nonnegative minimizer $u_a$ for $e(a)$;

(ii) if $a\ge a^*$, there is no minimizer for $e(a)$.
\end{Theorem}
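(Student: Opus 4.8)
The plan is to prove (i) and (ii) by the direct method of the calculus of variations, exploiting the key a priori bound \eqref{eq:1.8} together with the characterization $C_{gn}=2/\|Q\|_2^2$ from \eqref{eq:1.9}. First I would establish that $E_a$ is bounded below and coercive on the constraint set when $0\le a<a^*$. Using the operator inequality $\sqrt{-\Delta+m^2}\ge\sqrt{-\Delta}$ (valid pointwise in Fourier space when $m^2\ge 0$; the case of general $m\in\mathbb{R}$ only shifts the functional by the constant $\int(\sqrt{-\Delta+m^2}-\sqrt{-\Delta})u^2$, which is controlled), one gets, on $\|u\|_2^2=1$,
\begin{equation}\label{eq:pf1}
E_a(u)\ge\Big(1-\frac{a}{a^*}\Big)\int_{\mathbb{R}^3}|(-\Delta)^{\frac14}u|^2\,dx+\int_{\mathbb{R}^3}Vu^2\,dx+\text{(lower order)},
\end{equation}
where \eqref{eq:1.8} with $C_{gn}=2/\|Q\|_2^2=2/a^*$ was used on the Hartree term. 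Since $a<a^*$ and $V\ge 0$, the right-hand side is nonnegative up to a constant, so $e(a)>-\infty$, and \eqref{eq:pf1} also shows that any minimizing sequence is bounded in $\|\cdot\|_{\mathcal H}$.

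Next I would run the compactness argument. Take a minimizing sequence $\{u_n\}$; by \eqref{eq:pf1} it is bounded in $\mathcal H$, hence (up to a subsequence) $u_n\rightharpoonup u_a$ weakly in $H^{1/2}$ and in the weighted $L^2$ space. The crucial point is that the potential term $\int Vu^2$, with $V\to\infty$ at infinity, upgrades this to strong convergence in $L^2(\mathbb{R}^3)$: the bound on $\int Vu_n^2$ together with \eqref{eq:1.6} gives tightness, so no mass escapes to infinity, and combined with the local compact embedding $H^{1/2}_{\mathrm{loc}}\hookrightarrow L^2_{\mathrm{loc}}$ one obtains $u_n\to u_a$ strongly in $L^2$, in particular $\|u_a\|_2^2=1$. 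Strong $L^2$ convergence plus boundedness in $H^{1/2}$ also passes to the nonlocal term: by Hardy–Littlewood–Sobolev and interpolation, $\int(|x|^{-1}*u_n^2)u_n^2\to\int(|x|^{-1}*u_a^2)u_a^2$. The quadratic form $\int u\sqrt{-\Delta+m^2}\,u$ and $\int Vu^2$ are weakly lower semicontinuous, so $E_a(u_a)\le\liminf E_a(u_n)=e(a)$, forcing equality; thus $u_a$ is a minimizer. Replacing $u_a$ by $|u_a|$ (which does not increase $\int u\sqrt{-\Delta}\,u$ — here I would invoke the standard fact that $\||(-\Delta)^{1/4}|u|\|_2\le\|(-\Delta)^{1/4}u\|_2$, or more carefully the positivity-preserving property of $\sqrt{-\Delta+m^2}$) and leaves all other terms unchanged, we may take $u_a\ge 0$, proving (i).

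For (ii), the strategy is to exhibit a minimizing family driving $e(a)$ to its infimum while losing compactness, i.e. to show $e(a)\le 0$ is not attained for $a\ge a^*$. I would use the ground state $Q$ of \eqref{eq:1.7} as a profile: set $u_\tau(x)=\tau^{3/2}\,\frac{Q(\tau(x-x_0))}{\|Q\|_2}$ for $\tau\to\infty$, where $x_0$ is chosen near a minimum point of $V$ (so $V(x_0)=0$ by \eqref{eq:1.6}). A scaling computation, using that $\sqrt{-\Delta}$ is homogeneous of degree one and that $Q$ saturates \eqref{eq:1.8} (equivalently minimizes $I$ in \eqref{eq:1.8a}), shows
\begin{equation}\label{eq:pf2}
\int_{\mathbb{R}^3}u_\tau\sqrt{-\Delta}\,u_\tau\,dx-\frac{a}{2}\int_{\mathbb{R}^3}(|x|^{-1}*u_\tau^2)u_\tau^2\,dx=\tau\Big(1-\frac{a}{a^*}\Big)\frac{\int Q\sqrt{-\Delta}\,Q}{\|Q\|_2^2},
\end{equation}
which is $\le 0$ when $a\ge a^*$, while the remaining terms — the mass term from $\sqrt{-\Delta+m^2}-\sqrt{-\Delta}$ and $\int Vu_\tau^2$ — stay bounded (using the decay \eqref{eq:1.10} of $Q$ and local boundedness of $V$ near $x_0$). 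This yields $e(a)\le 0$; a matching lower bound $e(a)\ge 0$ comes again from \eqref{eq:1.8} (for $a=a^*$ directly, and for $a>a^*$ one refines by also letting $x_0$ vary to show $e(a)=-\infty$, or rather that $e(a)$ is not achieved). Finally I would argue by contradiction: if $u$ were a minimizer for $a\ge a^*$, then equality would have to hold in \eqref{eq:1.8} for $u$, which by \eqref{eq:1.9} forces $u$ to be (a rescaled translate of) $Q$; but $Q$ is not compactly localized against the growing potential, so $\int Vu^2=\infty$ unless $u$ is concentrated, contradicting either $u\in\mathcal H$ or the mass normalization — ruling out a minimizer.

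The main obstacle I anticipate is the compactness step in part (i): specifically, showing that the confining potential genuinely prevents vanishing/dichotomy for the \emph{nonlocal} Hartree nonlinearity under only weak $H^{1/2}$ convergence plus strong $L^2$ convergence. One must check carefully that the Hardy–Littlewood–Sobolev functional $u\mapsto\int(|x|^{-1}*u^2)u^2$ is continuous along the minimizing sequence — this needs an interpolation estimate bounding $\|u_n\|_q$ for a suitable $q\in(2,3)$ by $\|u_n\|_2$ and $\|(-\Delta)^{1/4}u_n\|_2$ via \eqref{eq:1.7a}, and then splitting the difference of the quartic terms using strong $L^2$ convergence in the low exponent and uniform $H^{1/2}$ bounds in the high one. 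A secondary technical point is handling general $m\in\mathbb{R}$ uniformly (the symbol $\sqrt{|\xi|^2+m^2}$ is still well-defined and $\ge 0$ for all real $m$), but this only affects constants, not the structure of the argument.
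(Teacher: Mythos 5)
Your argument for part (i) is essentially the paper's: coercivity via the sharp Gagliardo--Nirenberg bound \eqref{eq:1.8} with $C_{gn}=2/a^*$, boundedness of a minimizing sequence in $\mathcal{H}$, the compact embedding $\mathcal{H}\hookrightarrow L^q(\mathbb{R}^3)$ for $q\in[2,3)$ coming from the confining potential, continuity of the Hartree term along this convergence by Hardy--Littlewood--Sobolev, weak lower semicontinuity of the quadratic form, and finally $E_a(|u|)\le E_a(u)$ to get a nonnegative minimizer. That all matches.

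In part (ii), however, there is a genuine gap. You take the pure rescaling $u_\tau(x)=\tau^{3/2}Q(\tau(x-x_0))/\|Q\|_2$ with no spatial cutoff and assert that $\int_{\mathbb{R}^3}Vu_\tau^2\,dx$ ``stays bounded using the decay of $Q$ and local boundedness of $V$ near $x_0$.'' Local boundedness near $x_0$ does not control this integral: after the change of variables,
\[
\int_{\mathbb{R}^3}Vu_\tau^2\,dx=\frac{1}{\|Q\|_2^2}\int_{\mathbb{R}^3}V(\tau^{-1}y+x_0)\,Q^2(y)\,dy,
\]
and the argument of $V$ still ranges over all of $\mathbb{R}^3$. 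The theorem assumes only $V\in L_{\mathrm{loc}}$ with $V\to\infty$ at infinity, with no growth bound, so (for example $V(x)=e^{|x|^2}$) this integral is $+\infty$ and $u_\tau\notin\mathcal{H}$. Your trial functions are not admissible in general. The paper avoids this by multiplying by a cutoff $\varphi$ supported in $B_{2\delta}(x_0)$, so that $U_R$ has fixed compact support and $\int VU_R^2$ is controlled by $\sup_{B_{2\delta}(x_0)}V<\infty$; the resulting error terms are then estimated using the $O(|x|^{-4})$ decay of $Q$ and the commutator bound $\|[\sqrt{-\Delta},\varphi_R]\|\lesssim\|\nabla\varphi_R\|_\infty$. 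This cutoff is not a cosmetic refinement but is forced by the generality of $V$.

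Your final nonexistence argument at $a=a^*$ is also stated backwards. Suppose a minimizer $u$ exists at $a=a^*$ with $\|u\|_2=1$; from the chain of inequalities giving $E_{a^*}(u)\ge\int Vu^2\ge 0$ together with $e(a^*)=0$, one concludes $\int Vu^2=0$ (not that it is $\infty$). Equality in the Gagliardo--Nirenberg step then forces $|u|$ to be, after scaling, a nonnegative nontrivial solution of \eqref{eq:1.7}, and such a solution satisfies $|u|\ge C(1+|x|)^{-4}>0$ everywhere. Since $u$ never vanishes, $\int Vu^2=0$ would force $V\equiv 0$, contradicting $V\to\infty$. (In fact for $m\neq 0$ the contradiction is even more immediate, since $\sqrt{-\Delta+m^2}>\sqrt{-\Delta}$ strictly on $L^2\setminus\{0\}$ rules out equality in the energy lower bound.) So the contradiction runs through $\int Vu^2=0$ and strict positivity of the ground state, not through a divergence of the potential energy.
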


If $u_a$ is a minimizer of $e(a)$, it is natural to think that $|u_a|$ is also a minimizer of $e(a)$.
But this is not obvious since we are
dealing with nonlocal operators. We will show in section 4 that $E_a(u)\geq E_a(|u|)$ for $u\in \mathcal{H}$. So the minimizer $u_a$ does not change the sign, and satisfies the following equation
\begin{equation}\label{eq:1.10a}
\sqrt{-\Delta+m^2}\,u+V(x)u=\mu_a u + a(|x|^{-1}*u^2)u\quad {\rm in}\quad \mathbb{R}^3,
\end{equation}
where $\mu_a$ is the Lagrange multiplier.

Next, we investigate the asymptotic behavior of the nonnegative minimizer $u_a$ of $e(a)$ as $a\to a^*$. To localize concentration points of $u_a$ as $a\to a^*$, the graph of the external potential $V$ plays an important role. We assume that $V$ has isolated minima and behaves like a power of the distance from these points. Precisely,
we suppose that there exist $n\geq 1$ distinct points $x_i\in \mathbb{R}^3$ such that
\begin{equation}\label{eq:1.11}
V(x)=\prod_{i=1}^{n}|x-x_i|^{p_i}
\end{equation}
with $p_i>0$. Let $p=\max\{p_i|1\le i\le n\}$ and
set $\kappa_i\in (0,\infty]$ by
\begin{equation}\label{eq:1.12}
\kappa_i=\lim_{x\rightarrow x_i}\frac{V(x)}{|x-x_i|^p}.
\end{equation}
Define $\kappa=\min\{\kappa_i|1\le i\le n\}$ and set
\begin{equation}\label{eq:1.13}
\mathcal{Z}=\{x_i|\kappa_i=\kappa\}.
\end{equation}

\bigskip

Our second result is as follows.
\begin{Theorem}\label{thm:1.2}
Suppose $V$ satisfies \eqref{eq:1.11}. Let $u_a>0$ be a nonnegative minimizer of $e(a)$ with $a<a^*$. If one of the following conditions holds,

(i) $m\neq0$ and $0<p<1$;

(ii) $m=0$ and either $0<p<\frac 52$ or $0<\sum_{i=1}^np_i<5$,\\
then for any sequence $\{a_k\}$ with $a_k\rightarrow a^*$ as $k\rightarrow \infty$, there exists a subsequence of $\{a_k\}$, still denoted by $\{a_k\}$,
an $x_0\in \mathcal{Z}$, and a ground state $Q$  of \eqref{eq:1.7} such that
\begin{equation}\label{eq:1.14a}
\lim_{k\rightarrow \infty}(a^*-a_k)^{\frac{3}{2(p+1)}}u_{a_k}((a^*-a_k)^{\frac{1}{p+1}}x+x_0)=
\frac{\mu^{\frac{3}{2}}}{\|Q\|_2}Q(\mu x)
\end{equation}
strongly in $L^2(\mathbb{R}^3)$ with
\begin{equation}\label{eq:1.14}
\mu=\Big(p\kappa\int_{\mathbb{R}^3}|x|^pQ(x)^2\,dx\Big)^{\frac{1}{p+1}}.
\end{equation}
\end{Theorem}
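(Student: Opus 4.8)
\emph{Proof sketch.} The argument follows the blow-up scheme for mass-critical constrained minimization; the two features special to \eqref{eq:1.4}, namely the nonlocal ``massive'' operator $\sqrt{-\Delta+m^2}$ and the absence of a uniqueness statement for $Q$, are handled by condition (i) and by Lemma \ref{lem:A3} together with symmetric rearrangement. The first task is to produce matching asymptotics for $e(a)$ as $a\to a^*$. For the upper bound I test $E_a$ on the dilated solitons $u_\tau(x)=\tau^{3/2}(a^*)^{-1/2}Q(\tau(x-x_i))$ with $x_i\in\mathcal Z$; using the scaling identities $\|(-\Delta)^{1/4}Q\|_2^2=\|Q\|_2^2=a^*$ and $\int(|x|^{-1}*Q^2)Q^2\,dx=2a^*$ (which follow from \eqref{eq:1.7} and \eqref{eq:1.9}) and optimizing in $\tau$ gives
\[
 e(a)\le\frac{p+1}{p^{\,p/(p+1)}\,a^*}\Big(\kappa\int_{\mathbb{R}^3}|x|^pQ^2\,dx\Big)^{1/(p+1)}(a^*-a)^{p/(p+1)}\,(1+o(1)),
\]
the hypotheses (i)/(ii) being exactly what makes the remaining contributions $o\big((a^*-a)^{p/(p+1)}\big)$ (the $O(\tau^{-1})$ term produced by $m^2$ when $m\ne0$; the far-field growth of $V$, and a cut-off of $Q$ when $\sum_i p_i\ge5$, when $m=0$). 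For the lower bound, $\sqrt{-\Delta+m^2}\ge\sqrt{-\Delta}$, $V\ge0$ and \eqref{eq:1.9} give $e(a)\ge\frac{a^*-a}{a^*}\int u\sqrt{-\Delta}u\,dx+\int Vu^2\,dx\ge0$ whenever $\|u\|_2=1$; hence $e(a)\to0$, $\|(-\Delta)^{1/4}u_a\|_2^2\le C(a^*-a)^{-1/(p+1)}$ and $\int Vu_a^2\,dx\le e(a)\to0$.

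If $\|(-\Delta)^{1/4}u_{a_k}\|_2$ remained bounded along some $a_k\to a^*$, then $\{u_{a_k}\}$ would be bounded in $\mathcal H$, and by the compact embedding $\mathcal H\hookrightarrow L^q(\mathbb{R}^3)$ ($2\le q<3$) used in the proof of Theorem \ref{thm:1.1} a subsequence would converge to a minimizer of $e(a^*)$, contradicting Theorem \ref{thm:1.1}(ii). Thus $\varepsilon_a:=\|(-\Delta)^{1/4}u_a\|_2^{-2}\to0$, with $\varepsilon_a\ge c(a^*-a)^{1/(p+1)}$ by the previous step. Set $w_a(x)=\varepsilon_a^{3/2}u_a(\varepsilon_a x+z_a)$, with translations $z_a$ to be fixed below, so that $\|w_a\|_2=\|(-\Delta)^{1/4}w_a\|_2=1$. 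Rewriting $E_a(u_a)=e(a)$ in these variables and multiplying by $\varepsilon_a$,
\[
 \varepsilon_a e(a)=\int\sqrt{|\xi|^2+\varepsilon_a^2m^2}\,|\widehat{w_a}|^2\,d\xi+\varepsilon_a\int V(z_a+\varepsilon_a x)w_a^2\,dx-\frac a2\int(|x|^{-1}*w_a^2)w_a^2\,dx;
\]
the first term lies in $[1,1+\varepsilon_a|m|]$, the second is nonnegative and $\le\varepsilon_a e(a)$, and $\varepsilon_a e(a)\to0$, so $\int(|x|^{-1}*w_a^2)w_a^2\,dx\to 2/a^*$. Since $\{w_a\}$ is bounded in $H^{1/2}$, vanishing (in the sense of Lions) cannot occur, so the $z_a$ can be chosen so that a subsequence of $w_a$ converges weakly in $H^{1/2}(\mathbb{R}^3)$ to some $w\not\equiv0$.

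A Brezis--Lieb decomposition of $\|w_a\|_2^2$, $\|(-\Delta)^{1/4}w_a\|_2^2$ and $\int(|x|^{-1}*w_a^2)w_a^2\,dx$ relative to $r_a=w_a-w$, together with \eqref{eq:1.9} applied to $w$ and to $r_a$, shows that with $s=\|w\|_2^2$, $t=\|(-\Delta)^{1/4}w\|_2^2$ one is forced to have $1\le st+(1-s)(1-t)$, hence $s=t=1$. Therefore $w_a\to w$ strongly in $H^{1/2}$, $\|w\|_2=\|(-\Delta)^{1/4}w\|_2=1$, and $w$ achieves the infimum in \eqref{eq:1.9}. By the characterization of its optimizers (see \cite{LenLew,FL1}) and the identities $\|Q\|_2^2=\|(-\Delta)^{1/4}Q\|_2^2=a^*$ valid for every ground state, the normalizations built into $w_a$ force $w=Q/\|Q\|_2$ for some ground state $Q$ of \eqref{eq:1.7}, with no residual dilation (but possibly a translation, and a priori depending on the subsequence). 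Since symmetric decreasing rearrangement does not increase the quotient in \eqref{eq:1.9}, the rearrangement $Q^*$ is again a ground state and $\int|x|^p(Q^*)^2\,dx\le\int|x|^pQ^2\,dx$.

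Finally, $w_a$ satisfies the rescaled equation
\[
 \sqrt{-\Delta+\varepsilon_a^2m^2}\,w_a+\varepsilon_a V(z_a+\varepsilon_a x)w_a=\varepsilon_a\mu_a w_a+a(|x|^{-1}*w_a^2)w_a,\qquad \varepsilon_a\mu_a\to-1,
\]
and $\int Vu_a^2\,dx=\int V(z_a+\varepsilon_a x)w_a^2\,dx\to0$ forces $z_a$ bounded with $z_a\to x_{i_0}$ for some zero $x_{i_0}$ of $V$; bootstrapping this equation gives a uniform decay $w_a(x)\le C(1+|x|)^{-4}$, hence (using $p<5$, which (i)/(ii) imply) $\int|x|^pw_a^2\,dx\to\int|x|^pw^2\,dx$ and $\zeta_a:=(z_a-x_{i_0})/\varepsilon_a$ bounded, say $\zeta_a\to\zeta_0$. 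Feeding $\int Vu_a^2\,dx\ge(\kappa_{i_0}-o(1))\,\varepsilon_a^p\int|x+\zeta_a|^pw_a^2\,dx$ into the lower bound of the first step,
\[
 e(a)\ \ge\ \frac{a^*-a}{a^*\varepsilon_a}+\frac{\kappa_{i_0}}{a^*}\Big(\int_{\mathbb{R}^3}|x+\zeta_0|^pQ^2\,dx\Big)\varepsilon_a^{\,p}\,(1+o(1)).
\]
Minimizing the right-hand side over $\varepsilon_a$, comparing with the upper bound taken for the radial ground state $Q^*$, and using $\int|x+\zeta_0|^pQ^2\,dx\ge\int|x|^p(Q^*)^2\,dx$, one concludes $\kappa_{i_0}=\kappa$ (so $x_{i_0}\in\mathcal Z$), equality in the rearrangement inequality (so $Q=Q^*(\cdot+\zeta_0)$), and $\varepsilon_a\sim\big((a^*-a)/(p\kappa\int|x|^p(Q^*)^2\,dx)\big)^{1/(p+1)}$; in particular $(a^*-a)^{1/(p+1)}/\varepsilon_a\to\mu$ with $\mu$ as in \eqref{eq:1.14} for $Q^*$. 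Writing $v_a(x)=(a^*-a)^{3/(2(p+1))}u_a\big((a^*-a)^{1/(p+1)}x+x_{i_0}\big)=\big(\tfrac{(a^*-a)^{1/(p+1)}}{\varepsilon_a}\big)^{3/2}w_a\big(\tfrac{(a^*-a)^{1/(p+1)}}{\varepsilon_a}x-\zeta_a\big)$ and letting $a\to a^*$ in $L^2$ — the translations $\zeta_a\to\zeta_0$ exactly cancelling the shift in $w=Q^*(\cdot+\zeta_0)/\|Q^*\|_2$ — yields \eqref{eq:1.14a} with $x_0=x_{i_0}\in\mathcal Z$ and $Q=Q^*$. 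The routine ingredients are the upper bound and the compactness of the third step; the main obstacle is this last step, producing a lower bound for $e(a)$ sharp enough to detect the scale of the profile, which needs the uniform decay of $w_a$, the moment convergence $\int|x|^pw_a^2\,dx\to\int|x|^pw^2\,dx$, and the rearrangement device that simultaneously pins $x_{i_0}$ in $\mathcal Z$ and absorbs the residual translation in spite of $Q$ not being known to be unique; the restrictions on $p$ and $m$ are used precisely here and in the upper bound to keep the sub-leading terms negligible.
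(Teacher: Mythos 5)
Your blueprint is the same blow-up strategy the paper uses (sharp two-sided energy asymptotics, rescaling, identification of the limiting profile as an optimizer of \eqref{eq:1.9}, then variational matching to fix the scale, the shift and the concentration point in $\mathcal Z$), and your upper bound constant $(p+1)\mu/(p a^*)$ and the role of the conditions (i)/(ii) agree exactly with \eqref{eq:3.9} and the case analysis in Lemma \ref{lem:3.2}. But the route you take through the compactness and the final lower bound differs from the paper's in two respects, and one of these hides a real gap.

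First, the paper does not rescale by $\varepsilon_a=\|(-\Delta)^{1/4}u_a\|_2^{-2}$ with free translations $z_a$ chosen via concentration--compactness; it rescales by $\lambda_a=(a^*-a)^{1/(p+1)}$ about each fixed $x_i$ (see \eqref{eq:3.16}) and uses the localization of $\{V\le\gamma\lambda_a^p\}$ in \eqref{eq:3.19} to show that the mass collects near one $x_j$. It then identifies $w_0^j$ not via Brezis--Lieb and the Weinstein quotient but by passing to the limit in the rescaled Euler--Lagrange equation and invoking Lemma \ref{lem:A3} (every ground state of \eqref{eq:1.7} minimizes the $L^2$ norm among nontrivial solutions), after which $\|w_0^j\|_2=1$ follows by pinching between $\|w\|_2\ge\|Q\|_2$ and weak lower semicontinuity. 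Your Brezis--Lieb/strict-subadditivity computation ($1\le st+(1-s)(1-t)\Rightarrow s=t=1$) is a viable alternative provided you justify a Brezis--Lieb splitting for $\int(|x|^{-1}*w_a^2)w_a^2\,dx$; you should state that step explicitly rather than fold it into "a Brezis--Lieb decomposition."

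Second, and this is the genuine gap: you assert that bootstrapping the rescaled equation yields a uniform decay $w_a(x)\le C(1+|x|)^{-4}$, and you use this to get $\int|x|^pw_a^2\,dx\to\int|x|^pw^2\,dx$ and the boundedness of $\zeta_a$. This uniform bound in $a$ is not proved (and is not easy: the coefficients of the rescaled equation vary with $a$, the potential term is only controlled in an integrated sense, and $w_a$ is not known to be radial), and in fact it is not needed. The paper sidesteps it with Fatou's lemma in \eqref{eq:3.35}: since $V\ge0$ and $w_{a_k}^j\to w_0^j$ pointwise along a subsequence, one has the one-sided bound
\begin{equation*}
\liminf_{k\to\infty}\lambda_{a_k}^{-p}\int_{\mathbb{R}^3}V(\lambda_{a_k}x+x_j)\,w_{a_k}^j(x)^2\,dx
\ \ge\ \kappa_j\int_{\mathbb{R}^3}|x|^p\,w_0^j(x)^2\,dx,
\end{equation*}
which, combined with the upper bound \eqref{eq:3.9} and the nonlocal term asymptotics (Lemma \ref{lem:3.3}), already forces $\zeta_a$ to be bounded (otherwise Fatou would give $+\infty$ on the right against a bounded left-hand side) and yields \eqref{eq:3.34}. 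Replacing your decay/moment-convergence step by this Fatou argument repairs the proposal without otherwise altering its structure. Your rearrangement trick to absorb the residual translation is a workable alternative to the paper's argument (which instead compares $\int|x+\beta^2y_0|^pQ^2\,dx$ with $\int|x|^pQ^2\,dx$ directly using the radial symmetry of $Q$ guaranteed by \cite{FL1}), but again you should cite the Pólya--Szegő inequality for $\sqrt{-\Delta}$ and the Riesz rearrangement inequality for the Coulomb term rather than state the conclusion only.

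Finally, a small inaccuracy: in case (ii) the two subcases $0<p<\tfrac52$ and $0<\sum_ip_i<5$ are disjunctive and treated with different test functions (cut-off $U_R$ for the former, the uncut $u_\tau$ for the latter); your phrasing "a cut-off of $Q$ when $\sum_ip_i\ge5$" conflates them.
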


Theorem \ref{thm:1.2} describes the collapse concentration of ground state $u_a$ as $a\to a^*$.
Since the operator $\sqrt{-\Delta+m^2}$ and nonlinear term are nonlocal, and the operator $\sqrt{-\Delta+m^2}$ is not scaling invariant, it brings new difficulties. To pursue the study of the problem, it needs to develop some new techniques. In treating the nonlocal term, we need to use the Hardy-Littlewood-Sobolev inequality. Since we can not use integral by part for fractional operators directly, instead we need to compare the operator $\sqrt{-\Delta+m^2}$ with the fractional Laplacian, and estimate the commutator of the fractional Laplacian and smooth functions.

Finally, we remark that although it is of interest to consider the same problem for the general operator $(-\Delta+m^2)^s$ in mathematics, we focus on the case $s = \frac 12$ for the sake of physical reasons, and our argument can be carried out for the general case.

After this paper was submitted, we learned that a similar minimization problem of \eqref{eq:1.4} was considered in \cite{GZ} and \cite{N}. In \cite{GZ}, a problem with nonlocal nonlinear terms was studied while in \cite{N}, it was considered a case without trapping potential $V$. This paper is  organized as follows. In section 2, we show the existence and nonexistence of minimizers of $e(a)$ and prove Theorem \ref{thm:1.1}; in section 3, we first estimate the energy of minimizers, then using the blow-up method to establish \eqref{eq:1.14a}. Hence, Theorem \ref{thm:1.2} follows.

In the sequel, we use $\|\cdot\|_p$ to denote $L^p(\mathbb{R}^3)$ norm.

\bigskip

\section{Existence and nonexistence of minimizers}

\bigskip

In this section, we will study the existence and nonexistence of  minimizers for the minimization problem $e(a)$.
To this end, we need to establish a compact Sobolev type embedding, which will be shown by the Galiardo-Nirenberg inequality.

\begin{Lemma} \label{lem:2.1} Suppose $V\in L_{loc}(\mathbb{R}^3)$ and  $\lim_{x\to\infty}V(x)=\infty$. Then the embedding $\mathcal{H}\hookrightarrow L^q(\mathbb{R}^3)$ is compact for $ q\in[2,3)$.
\end{Lemma}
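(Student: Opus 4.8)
The plan is to prove compactness of the embedding $\mathcal{H}\hookrightarrow L^q(\mathbb{R}^3)$ for $q\in[2,3)$ in two steps: first establish the continuous embedding, then upgrade to compactness by a tightness/concentration argument using the coercivity of $V$ at infinity. For the continuous embedding, I would interpolate: the Gagliardo--Nirenberg inequality \eqref{eq:1.7a} gives $\|u\|_q\le\|u\|_2^{\theta}\|(-\Delta)^{1/4}u\|_2^{1-\theta}$ for $2\le q\le 3$, and since both $\|u\|_2$ and $\|(-\Delta)^{1/4}u\|_2$ are controlled by $\|u\|_{\mathcal{H}}$ (note $\int(1+V)u^2\,dx\ge\int u^2\,dx$ because $V\ge 0$ by \eqref{eq:1.6}), we get $\|u\|_q\le C\|u\|_{\mathcal{H}}$; in particular $\mathcal{H}\hookrightarrow L^q$ continuously for all $q\in[2,3]$.

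For compactness, take a bounded sequence $\{u_k\}\subset\mathcal{H}$, say $u_k\rightharpoonup u$ weakly in $\mathcal{H}$ (hence weakly in $H^{1/2}$ and weakly in each $L^q$, $q\in[2,3]$). I want to show $u_k\to u$ strongly in $L^q(\mathbb{R}^3)$ for $q\in[2,3)$. Write $v_k=u_k-u$, so $v_k\rightharpoonup 0$ in $\mathcal{H}$ and $\{v_k\}$ is bounded there, say $\|v_k\|_{\mathcal{H}}\le M$. Fix $R>0$. On the ball $B_R$, the embedding $H^{1/2}(B_R)\hookrightarrow\hookrightarrow L^q(B_R)$ is compact for $q\in[2,3)$ (standard Rellich--Kondrachov for fractional Sobolev spaces on bounded domains, valid since $q<2^*_{1/2}=3$ in dimension $3$), so $v_k\to 0$ in $L^q(B_R)$. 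For the exterior estimate, I use the coercivity of $V$: given $\varepsilon>0$, by \eqref{eq:1.6} there is $R_\varepsilon$ with $V(x)\ge 1/\varepsilon$ for $|x|\ge R_\varepsilon$, hence
\[
\int_{|x|\ge R_\varepsilon}v_k^2\,dx\le\varepsilon\int_{|x|\ge R_\varepsilon}V v_k^2\,dx\le\varepsilon M^2,
\]
which controls the $L^2$-mass at infinity uniformly. To handle $q>2$, interpolate between $L^2$ and $L^3$ on the exterior region: $\|v_k\|_{L^q(|x|\ge R_\varepsilon)}\le\|v_k\|_{L^2(|x|\ge R_\varepsilon)}^{\alpha}\|v_k\|_{L^3(|x|\ge R_\varepsilon)}^{1-\alpha}$ for the appropriate $\alpha\in(0,1]$, and since $\|v_k\|_3\le C\|v_k\|_{\mathcal{H}}\le CM$ is bounded, the right side is $\le (\varepsilon M^2)^{\alpha/2}(CM)^{1-\alpha}$, which is small uniformly in $k$. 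Combining: choose $R=R_\varepsilon$, split $\|v_k\|_q^q=\int_{B_R}+\int_{|x|\ge R}$, make the exterior piece small by the above and the interior piece small by Rellich for $k$ large; conclude $v_k\to 0$ in $L^q(\mathbb{R}^3)$.

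The main obstacle is the exterior tail estimate for $q\in(2,3)$: coercivity of $V$ only directly controls the $L^2$-mass at infinity, not the $L^q$-mass, so the interpolation step bridging $L^2$ and $L^3$ is essential, and it works precisely because $q<3$ keeps the $L^3$ factor (bounded via Gagliardo--Nirenberg) from blowing up — this is why the endpoint $q=3$ is excluded. A minor technical point is invoking the fractional Rellich--Kondrachov theorem on balls, which requires $q$ strictly below the critical exponent $3$; this is again consistent with the stated range. I would also remark that $V\in L^\infty_{loc}$ is used to ensure $\mathcal{H}$ restricted to $B_R$ embeds into $H^{1/2}(B_R)$ with the extra $Vu^2$ term being harmless on bounded sets.
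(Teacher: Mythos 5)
Your proposal is correct and uses essentially the same ingredients as the paper: local compactness on balls, coercivity of $V$ to control the $L^2$-tail at infinity, and a Gagliardo--Nirenberg/H\"older interpolation between $L^2$ and $L^3$ to pass to $L^q$ with $q<3$. The only difference is organizational: the paper first establishes strong $L^2(\mathbb{R}^3)$ convergence (splitting only the $L^2$-integral into interior and exterior) and then applies the Gagliardo--Nirenberg inequality $\|u_n-u\|_q\le\|u_n-u\|_2^{\theta}\|(-\Delta)^{1/4}(u_n-u)\|_2^{1-\theta}$ globally, using boundedness of the $H^{1/2}$ seminorm; you instead split $\|v_k\|_q^q$ directly and interpolate only on the exterior region, which gives the same conclusion with slightly more bookkeeping but no new idea.
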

\begin{proof} Let $\{u_n\}$ be a bounded sequence in $\mathcal{H}$. We may assume that
\[
u_n\rightharpoonup u \quad {\rm in}\quad H^{\frac{1}{2}}(\mathbb{R}^3)\quad {\rm and}\quad  u_n\to u\quad {\rm in}\quad L_{loc}^q(\mathbb{R}^3)
\]
as $n\to\infty$ for $2\le q<3$. For $R>0$, we deduce
\[
\begin{split}
 &\int_{\mathbb{R}^3}|u_n-u|^2\,dx\\
 \le &\int_{B_R}|u_n-u|^2\,dx+2\int_{\mathbb{R}^3\backslash B_R}u^2\,dx+2\int_{\mathbb{R}^3\backslash B_R}u_n^2\,dx\\
 \le &\int_{B_R}|u_n-u|^2\,dx+2\int_{\mathbb{R}^3\backslash B_R}u^2\,dx+2\sup_{x\in\mathbb{R}^3\backslash B_R}V(x)^{-1}\int_{\mathbb{R}^3\backslash B_R}Vu_n^2\,dx.
 \end{split}
\]
 Let $n\rightarrow \infty$, and then let $R\rightarrow\infty$, we obtain $u_n\rightarrow u$ in $L^2(\mathbb{R}^3)$.
 Since $\{u_n\}$ is bounded in $H^{\frac{1}{2}}(\mathbb{R}^3)$, it follows from \eqref{eq:1.7a} that $u_n\rightarrow u$ in $L^q(\mathbb{R}^3)$ for $2\le q<3$.
 \end{proof}

Next, we show that the $L^p$ convergence implies the convergence of the nonlocal term.

\begin{Lemma}\label{lem:2.2} If $u_n\to u$ in $L^q(\mathbb{R}^3)$ for $2\le q<3$, then
 \[
 \lim_{n\rightarrow\infty}\int_{\mathbb{R}^3}(|x|^{-1}*u_n^2)u_n^2\,dx=
 \int_{\mathbb{R}^3}(|x|^{-1}*u^2)u^2\,dx.
\]
 \end{Lemma}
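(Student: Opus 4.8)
The plan is to bound the difference $\int_{\mathbb{R}^3}(|x|^{-1}*u_n^2)u_n^2\,dx - \int_{\mathbb{R}^3}(|x|^{-1}*u^2)u^2\,dx$ using the Hardy--Littlewood--Sobolev (HLS) inequality together with the given $L^q$ convergence. First I would split the difference in the standard telescoping way,
\[
(|x|^{-1}*u_n^2)u_n^2-(|x|^{-1}*u^2)u^2 = \big(|x|^{-1}*(u_n^2-u^2)\big)u_n^2 + \big(|x|^{-1}*u^2\big)(u_n^2-u^2),
\]
so that after integrating it suffices to estimate two bilinear terms of the form $\int (|x|^{-1}*f)g\,dx$. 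By HLS in $\mathbb{R}^3$ with the kernel $|x|^{-1}$ (i.e.\ $\lambda=1$, so the exponent relation is $1/r+1/s+1/3=2$, giving $r=s=6/5$), each such term is bounded by $C\|f\|_{6/5}\|g\|_{6/5}$.

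Next I would control the $L^{6/5}$ norms appearing there. Write $u_n^2-u^2=(u_n-u)(u_n+u)$; by Hölder with exponents chosen so that $L^{6/5}$ is reached from two $L^{12/5}$ factors, $\|u_n^2-u^2\|_{6/5}\le \|u_n-u\|_{12/5}\|u_n+u\|_{12/5}$, and similarly $\|u_n^2\|_{6/5}=\|u_n\|_{12/5}^2$, $\|u^2\|_{6/5}=\|u\|_{12/5}^2$. Since $12/5=2.4\in[2,3)$, the hypothesis $u_n\to u$ in $L^q(\mathbb{R}^3)$ for all $2\le q<3$ gives $\|u_n-u\|_{12/5}\to 0$ while $\|u_n+u\|_{12/5}$ and $\|u_n\|_{12/5}$ stay bounded. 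Combining, the integral of the first telescoped term is $O(\|u_n-u\|_{12/5})\to 0$, and likewise for the second term. Hence the nonlocal quantity converges, as claimed.

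The only point requiring a little care — and the closest thing to an obstacle — is bookkeeping the exponents so that every application of HLS and Hölder is legitimate: one must check that $6/5$ is the correct conjugate data for the Riesz potential $|x|^{-1}$ in dimension $3$, and that $L^{12/5}$ control is actually delivered by the hypothesis, which it is since $2<12/5<3$. Everything else is a routine estimate, and no use of the operator $\sqrt{-\Delta+m^2}$ or of the compact embedding is needed here; the lemma is purely a consequence of HLS plus strong $L^q$ convergence in the stated range.
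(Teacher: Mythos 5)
Your proof is correct and takes essentially the same approach as the paper: telescoping the difference, applying the Hardy--Littlewood--Sobolev inequality with $L^{6/5}$ data for the kernel $|x|^{-1}$ in $\mathbb{R}^3$, and then H\"older to reduce to $L^{12/5}$ norms, which are controlled by the hypothesis since $12/5\in[2,3)$. The only difference is which factor you isolate in the telescoping split, which is immaterial.
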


 \begin{proof} Since $\{u_n\}$ is bounded in $L^q(\mathbb{R}^3)$ for $2\le q<3$, by the Hardy-Littlewood-Sobolev
 inequality and the H\"{o}lder inequality, we have
 \begin{align}\label{eq:2.1}
 &\Big|\int_{\mathbb{R}^3}(|x|^{-1}*u_n^2)u_n^2\,dx
 -\int_{\mathbb{R}^3}(|x|^{-1}*u^2)u^2\,dx\Big|\nonumber\\
 &\le\Big|\int_{\mathbb{R}^3}(|x|^{-1}*u_n^2)(u_n^2-u^2)\,dx\Big|+
 \Big|\int_{\mathbb{R}^3}(|x|^{-1}*(u_n^2-u^2)u^2\,dx\Big|\nonumber\\
 &\le C\|u_n^2\|_{\frac{6}{5}}\|u_n^2-u^2\|_{\frac{6}{5}}+C\|u_n^2-u^2\|_{\frac{6}{5}}
 \|u^2\|_{\frac{6}{5}}\nonumber\\
 &\le C\|u_n\|_{\frac{12}{5}}^2\|u_n+u\|_{\frac{12}{5}}\|u_n-u\|_{\frac{12}{5}}+
 C\|u_n+u\|_{\frac{12}{5}}\|u_n-u\|_{\frac{12}{5}}\|u\|_{\frac{12}{5}}^2.
 \nonumber\\
  &\le C\|u_n-u\|_{\frac{12}{5}}.
 \end{align}
 The assertion follows by letting $n\to \infty$ in \eqref{eq:2.1}.
 \end{proof}

 \bigskip

 {\bf Proof of Theorem \ref{thm:1.1}} We first show $(i)$ of Theorem \ref{thm:1.1}.
 It is well known that
 \begin{equation}\label{eq:2.2}
 \sqrt{-\Delta+m^2}\ge\sqrt{-\Delta},
 \end{equation}
 that is, for $u\in H^{\frac12}(\mathbb{R}^3)$,
 $$
 (\sqrt{-\Delta+m^2}\,u,u)\ge(\sqrt{-\Delta}u,u).
 $$

 In particular, if $u\in\mathcal{H}$  with $\|u\|_2^2=1$, we infer from the Gagliardo-Nirenberg inequality \eqref{eq:1.8} that
 \begin{align}\label{eq:2.3}
 E_a(u)&\ge\int_{\mathbb{R}^3}\big(|(-\Delta)^{\frac{1}{4}}u|^2+Vu^2\big)\,dx
 -\frac{a}{2}\int_{\mathbb{R}^3}\big(|x|^{-1}*u^2\big)u^2\,dx\nonumber\\
 &\ge \Big(1-\frac{a}{a^*}\Big)\int_{\mathbb{R}^3}|(-\Delta)^{\frac{1}{4}}u|^2\,dx
 +\int_{\mathbb{R}^3}Vu^2\,dx,
 \end{align}
that is, $E_a(u)$ is bounded from below if $0\le a\le a^*$.

Now, let $\{u_n\}\in \mathcal{H}$ be a minimizing sequence of $e(a)$, i.e., $\|u_n\|_2^2=1$, and
$$
\lim_{n\to\infty}E_a(u_n)=e(a).
$$

If $0\le a<a^*$, we see from \eqref{eq:2.3} that $\{u_n\}$ is uniformly bounded in $\mathcal{H}$. By Lemma \ref{lem:2.1}, we may assume that
$$
u_n\rightharpoonup u\quad {\rm in}\quad \mathcal{H}, \quad {\rm and}\quad  u_n\to u\quad {\rm in}\quad L^q(\mathbb{R}^3)
$$
for $2\le q<3$. Hence,  $$\int_{\mathbb{R}^3}u^2\, dx=1,$$
and then
\[
E_a(u)\geq e(a).
\]

On the other hand, by Lemma A.4 in \cite{FJL}, the functional $(\sqrt{-\Delta+m^2}\,u,u)$ is weakly lower semi-continuous, Lemma \ref{lem:2.2} yields that
$$
E_a(u)\le e(a).
$$
As a result, $u$ is a minimizer of $e(a)$, and Lemma \ref{lem:A2} implies that $|u|$ is also a minimizer of $e(a)$.

Next, we prove $(ii)$ of Theorem \ref{thm:1.1}. To this purpose, we will construct a function $U_R$ such that if $a>a^*$, $E_a(U_R)\to -\infty$ as $R\to\infty$.

Choose a non-negative function $\varphi\in C_0^\infty(\mathbb{R}^3)$ satisfying
\begin{align}\label{eq:2.4a}
\varphi\equiv1 ~~{\rm for}~~|x|\le \delta; \quad\varphi\equiv0 ~~{\rm for} ~~|x|\ge 2\delta; \quad0\le\varphi\le1,
\end{align}
where $\delta>0$. For $R>1$, and $x_0\in\mathbb{R}^3$, set
\begin{equation}\label{eq:2.4}
U_R=A_R\frac{R^{\frac{3}{2}}}{\|Q\|_2}\varphi(x-x_0)Q\big(R(x-x_0)\big)
\end{equation} with $\|U_R\|_2^2=1$,
that is,
\begin{equation}\label{eq:2.5}
1=\int_{\mathbb{R}^N}\frac{A_R^2R^3}{\|Q\|_2^2}\varphi^2(x)Q^2(Rx)\,dx=
\int_{\mathbb{R}^N}\frac{A_R^2}{\|Q\|_2^2}\varphi^2(R^{-1}x)Q^2(x)\,dx.
\end{equation}
Obviously,
\[
1\ge \int_{B_{R\delta}}\frac{A_R^2}{\|Q\|_2^2}Q^2(x)\,dx
\ge\int_{B_{\delta}}\frac{A_R^2}{\|Q\|_2^2}Q^2(x)\,dx.
\]
It yields
\[
A_R^2\le \bigg(\int_{B_{\delta}}\frac{Q^2(x)}{\|Q\|_2^2}\,dx\bigg)^{-1}.
\]
The decay law of $Q$ given in \eqref{eq:1.10} implies that for $R$ large enough,
 \begin{equation}\label{eq:2.6}
 \begin{split}
 &|1- A_R^2|\\
 =&\left|\int_{\mathbb{R}^3}\frac{A_R^2}{\|Q\|_2^2}\Big(
   \varphi^2\big(\frac xR\big)-1\Big)Q^2(x)\,dx\right|\\
   \le &\int_{\mathbb{R}^3\setminus B_{R\delta}}\left|\frac{A_R^2}{\|Q\|_2^2}\Big(
   \varphi^2\big(\frac xR\big)-1 \Big)Q^2(x)\right|\,dx\\
   \le& C\int_{\mathbb{R}^3\setminus B_{R\delta}}|x|^{-8}\,dx\\
  \le &C(\delta)R^{-5}.\\
\end{split}
  \end{equation}
Therefore,
  \begin{equation}\label{eq:2.7}
  1\leq A_R^2=1-\int_{\mathbb{R}^N}\frac{A_R^2}{\|Q\|_2^2}\Big(
   \varphi^2\big(\frac xR\big)-1\Big)Q^2(x)\,dx\le 1+O(R^{-5})
   \end{equation}
as $R\to\infty$.

Now, we treat translations and scaling of integrals involving the nonlocal operator $\sqrt{-\Delta+m^2}$.
We will show that
\begin{equation}\label{eq:2.8}
\begin{split}
&\frac{\|Q\|_2^2}{A_R^2}\int_{\mathbb{R}^3}U_R\sqrt{-\Delta+m^2}U_R\,dx\\
=&R\int_{\mathbb{R}^3}\varphi(R^{-1}x)Q(x)\sqrt{-\Delta+R^{-2}m^2}
\big(\varphi(R^{-1}x)Q(x)\big)\,dx.\\
\end{split}
\end{equation}

We deal with the translation first.
By the definition of the pseudo-differential operators $\sqrt{-\Delta+m^2}$, we have
\begin{align}\label{eq:2.9}
&\quad\frac{\|Q\|_2^2}{A_R^2}
\int_{\mathbb{R}^3}U_R\sqrt{-\Delta+m^2}U_R\,dx\nonumber\\
&=\int_{\mathbb{R}^3}R^3\varphi(x-x_0)Q(R(x-x_0))\mathcal{F}^{-1}\Big(\sqrt{\xi^2+m^2}\mathcal{F}
\big(\varphi(y-x_0)Q(R(y-x_0))\big)\Big)\,dx
\nonumber\\
&=\frac 1{(2\pi)^{3}}\int_{\mathbb{R}^3}R^3\varphi(x)Q(Rx)\,dx
\int_{\mathbb{R}^3}{\rm e}^{i(x+x_0)\cdot \xi}\sqrt{\xi^2+m^2}\,d\xi
\int_{\mathbb{R}^3}{\rm e}^{-i\xi\cdot (y+x_0)}\varphi(y)Q(Ry)\,dy\nonumber\\
&=\int_{\mathbb{R}^3}R^3\varphi(x)Q(Rx)\mathcal{F}^{-1}\big(\sqrt{\xi^2+m^2}
\mathcal{F}(\varphi(y)Q(Ry))\big)\,dx\nonumber\\
&=\int_{\mathbb{R}^3}R^3\varphi(x)Q(Rx)\sqrt{-\Delta+m^2}\big(\varphi(x)Q(Rx)\big)\,dx.
\end{align}
Next, for the scaling, by the Plancherel theorem, we deduce
\begin{align}\label{eq:2.10}
&\int_{\mathbb{R}^3}R^3\varphi(x)Q(Rx)\sqrt{-\Delta+m^2}\big(\varphi(x)Q(Rx)\big)\,dx
\nonumber\\
=&\int_{\mathbb{R}^3}R^3\sqrt{\xi^2+m^2}\mathcal{F}\big(\varphi(y)Q(Ry))\big)^2\,d\xi\nonumber\\
=&\frac 1{(2\pi)^{3}}R^3\int_{\mathbb{R}^3}\sqrt{\xi^2+m^2}d\xi
\Big(\int_{\mathbb{R}^3}{\rm e}^{-i\xi\cdot y}\varphi(y)Q(Ry)\,dy\Big)^2\nonumber\\
=&\frac 1{(2\pi)^{3}}R\int_{\mathbb{R}^3}\sqrt{\xi^2+R^{-2}m^2}d\xi
\Big(\int_{\mathbb{R}^3}{\rm e}^{-i\xi\cdot y}\varphi(R^{-1}y)Q(y)\,dy\Big)^2\nonumber\\
=&R\int_{\mathbb{R}^3}\sqrt{\xi^2+R^{-2}m^2}
\mathcal{F}\big(\varphi(R^{-1}y)Q(y)\big)^2\,d\xi\nonumber\\
=&R\int_{\mathbb{R}^3}\varphi(R^{-1}x)Q(x)\sqrt{-\Delta+R^{-2}m^2}
\big(\varphi(R^{-1}x)Q(x)\big)\,dx.
\end{align}
Equations \eqref{eq:2.9} and \eqref{eq:2.10} implies \eqref{eq:2.8}.

By Lemma 3 in \cite{LY} or Lemma \ref{lem:A1} in Appendix, there holds
\begin{equation}\label{eq:2.11}
\sqrt{-\Delta+R^{-2}m^2}\le \sqrt{-\Delta}+\frac{1}{2}R^{-2}m^2(-\Delta)^{-\frac{1}{2}}.
\end{equation}
Equations \eqref{eq:2.8} and \eqref{eq:2.11} yield that
\begin{align}\label{eq:2.12}
&\quad\frac{\|Q\|_2^2}{A_R^2}\int_{\mathbb{R}^3} U_R\sqrt{-\Delta+m^2}U_R\,dx\nonumber\\
&\le R\int_{\mathbb{R}^3}\varphi(R^{-1}x)Q(x)\sqrt{-\Delta}\Big(\varphi(R^{-1}x)
Q(x)\Big)\,dx\nonumber\\
&~~~~+\frac{m^2}{2}R^{-1}\int_{\mathbb{R}^3}\varphi(R^{-1}x)Q(x)
(-\Delta)^{-\frac{1}{2}}\Big(\varphi(R^{-1}x)
Q(x)\Big)\,dx\nonumber\\
&=R\biggl(\int_{\mathbb{R}^3}Q(x)\sqrt{-\Delta}Q(x)\,dx+\int_{\mathbb{R}^3}
\Big(\varphi(R^{-1}x)-1\Big)Q(x)\sqrt{-\Delta}Q(x)\,dx\nonumber\\
&~~+\int_{\mathbb{R}^3}
\varphi(R^{-1}x)Q(x)\sqrt{-\Delta}\left(\Big(\varphi(R^{-1}x)-1\Big)Q(x)
\right)\,dx\biggl)\nonumber\\
&~~+\frac{m^2}{2}R^{-1}\int_{\mathbb{R}^3}\varphi\big(R^{-1}x\big)Q(x)
(-\Delta)^{-\frac{1}{2}}\Big(\varphi\big(R^{-1}x\big)Q(x)\Big)\,dx\nonumber\\
&=:R\int_{\mathbb{R}^3}Q(x)\sqrt{-\Delta}Q(x)\,dx+RA+RB+\frac{m^2}{2}R^{-1}E.
\end{align}

In the following, we estimate terms $A, B$ and $E$. We commence with the term $A$. By the Newton theorem, see Theorem 9.7 of \cite{LL},
\begin{equation}\label{eq:2.13}
|x|^{-1}*Q^2\le \frac{\|Q\|_2^2}{|x|},
\end{equation}
since $Q$ is a solution of problem \eqref{eq:1.7}, we find that
 \begin{equation}\label{eq:2.14}
 |\sqrt{-\Delta}Q|\le\Big(\frac{\|Q\|_2^2}{|x|}+1\Big)|Q|.
\end{equation}
Hence, we estimate by \eqref{eq:1.10} that
\begin{align}\label{eq:2.15}
|A|&\le\int_{\mathbb{R}^3\backslash B_{R\delta}}\Big|\big(\varphi(R^{-1}x)-1\big) Q(x)\sqrt{-\Delta}Q(x)\Big|\,dx
 \nonumber\\
 &\le C\int_{\mathbb{R}^3\backslash B_{R\delta}}|x|^{-8}dx
 \nonumber\\
 &\le C(\delta)R^{-5}.
\end{align}

To estimate $B$, we recall an estimate for commutators. Denote $\varphi_R:=\varphi(R^{-1}x)$. It is known from \cite{Cal,CoMe}, see also Remark 4 in \cite{LenLew}, that there holds
\begin{equation}\label{eq:2.16}
\|[\sqrt{-\Delta},\varphi_R]\|_{L^2(\mathbb{R}^3)}\le C\|\nabla \varphi_R\|_\infty,
\end{equation}
where $[\sqrt{-\Delta},\varphi_R]$ is the commutator of $\sqrt{-\Delta}$ and $\varphi_R$, and $[\cdot,\cdot]$ is the Lie bracket. Therefore,
\begin{equation}\label{eq:2.17}
\int_{\mathbb{R}^3}\big([\sqrt{-\Delta},\varphi_R]Q\big)^2\,dx\le C\|\nabla \varphi_R\|_\infty^2\|Q\|_2^2.
\end{equation}
Apparently,
\[
\begin{split}
|B|&\le\Big|\int_{\mathbb{R}^3}\big(\varphi(R^{-1}x)-1\big)Q(x)\varphi(R^{-1}x)
\sqrt{-\Delta} Q(x)\,dx\Big|\\
&\quad+\Big|\int_{\mathbb{R}^3}\big(\varphi(R^{-1}x)-1\big)Q(x)
[\sqrt{-\Delta},\varphi_R]Q(x)\,dx\Big|\\
&\le C\int_{\mathbb{R}^3\backslash B_{R\delta}}\big(|Q(x)|
|\sqrt{-\Delta} Q(x)|+|Q(x)|
\big|[\sqrt{-\Delta},\varphi_R]Q(x)\big|\big)\,dx\\
& = B_1 + B_2.
\end{split}
\]
By \eqref{eq:1.10} and \eqref{eq:2.14},
\[
|B_1|\leq C\int_{\mathbb{R}^3\backslash B_{R\delta}}|x|^{-8}\,dx\leq C(\delta)R^{-5}.
\]
The H\"older inequality and \eqref{eq:2.17} give that
\[
\begin{split}
|B_2|
&\leq C\Big(\int_{\mathbb{R}^3\backslash B_{R\delta}}Q(x)^2\,dx\Big)^{\frac{1}{2}}
\|[\sqrt{-\Delta},\varphi_R]Q\|_2\\
&\le C\Big(\int_{\mathbb{R}^3\backslash B_{R\delta}}|x|^{-8}dx\Big)^{\frac{1}{2}}\|\nabla\varphi_R\|_\infty\|Q\|_2\\
&\le C(\delta)R^{-\frac{5}{2}}R^{-1}\|Q\|_2^2\\
&\le C(\delta)R^{-\frac{7}{2}}.
\end{split}
\]
Finally, it follows from the Hardy-Littlewood-Sobolev inequality that
\[
|E|\le C\Big\|\varphi_{R}Q\Big\|^2_{\frac{3}{2}}\le C\|Q\|^2_{\frac{3}{2}}\le C\|Q\|^2_{H^{\frac{1}{2}}(\mathbb{R}^3)}.
\]
As a result,
\begin{align}\label{eq:2.18}
&\int_{\mathbb{R}^3}U_R\sqrt{-\Delta+m^2}U_R\,dx\nonumber\\
&\le\frac{R}{\|Q\|_2^2}\int_{\mathbb{R}^3}Q\sqrt{-\Delta}Q\,dx
+O(R^{-\frac{5}{2}})+ m^2O(R^{-1}).\nonumber\\
\end{align}

Now, we turn to consider the nonlocal term,
\[
\begin{split}
&\int_{\mathbb{R}^3}(|x|^{-1}*U_R^2)U_R^2\,dx\\
&=\frac{A_R^4R^6}{\|Q\|_2^4}\int_{\mathbb{R}^3}\int_{\mathbb{R}^3}
\frac{1}{|x-y|}\varphi^2(y)Q^2(Ry)\varphi^2(x)Q^2(Rx)\,dxdy\\
&=\frac{A_R^4R}{\|Q\|_2^4}\int_{\mathbb{R}^3}\int_{\mathbb{R}^3}
\frac{1}{|x-y|}\varphi^2(R^{-1}y)Q^2(y)\varphi^2(R^{-1}x)Q^2(x)\,dxdy\\
&=:\frac{A_R^4R}{\|Q\|_2^4}F.
\end{split}
\]
We can write
\[
\begin{split}
F&=\int_{\mathbb{R}^3}\int_{\mathbb{R}^3}\frac{1}{|x-y|}Q^2(x)Q^2(y)\,dxdy\\
&\quad+\int_{\mathbb{R}^3}\int_{\mathbb{R}^3}\frac{1}{|x-y|}\big(\varphi^2(R^{-1}x)-1\big)Q^2(x)Q^2(y)\,dxdy\\
&\quad+\int_{\mathbb{R}^3}\int_{\mathbb{R}^3}\frac{1}{|x-y|}\big(\varphi^2(R^{-1}y)-1\big)Q^2(y)\varphi^2(R^{-1}x)Q^2(x)\,dydx\\
&=:\int_{\mathbb{R}^3}(|x|^{-1}*Q^2)Q^2dx+F_1+F_2.
\end{split}
\]
By \eqref{eq:1.10} and the Newton theorem,
\[
\begin{split}
|F_1|
&\le \int_{\mathbb{R}^3}\int_{\mathbb{R}^3}\frac{1}{|y|}
\big|\varphi^2(R^{-1}x)-1\big|Q^2(x)Q^2(y)\,dxdy\\
&\le C\int_{\mathbb{R}^3}\frac{1}{|y|}\int_{\mathbb{R}^3\setminus B_{R\delta}}Q^2(x)Q^2(y)\,dxdy\\
&\le C\int_{\mathbb{R}^3}\int_{\mathbb{R}^3\setminus B_{R\delta}}\frac{1}{|y|}|x|^{-8}Q^2(y)\,dxdy\\
&\le C(\delta)R^{-5}\int_{\mathbb{R}^3}\frac{Q(y)^2}{|y|}\,dy.\\
\end{split}
\]
The Hardy inequality yields
\[
|F_1|\le C(\delta)R^{-5}\|(-\Delta)^{\frac{1}{4}}Q\|_2\le C(\delta)R^{-5}.
\]
Similarly, we have
\[
|F_2|\le  C(\delta)R^{-5}.
\]
In conclusion,
\begin{equation}\label{eq:2.19}
\begin{split}
&\int_{\mathbb{R}^3}(|x|^{-1}*U_R^2)U_R^2\,dx\\
\geq&\frac{R}{\|Q\|_2^4}\Big(\int_{\mathbb{R}^3}(|x|^{-1}*Q^2)Q^2\,dx-|F_1|-|F_2|\Big)\\
\geq &\frac{R}{\|Q\|_2^4}\int_{\mathbb{R}^3}(|x|^{-1}*Q^2)Q^2\,dx-O(R^{-4})\\
\end{split}
\end{equation}
for $R>0$ large.
By Lemma 5 in \cite{LenLew},
\[
2\|(-\Delta)^{\frac{1}{4}}Q\|_2^2=
\int_{\mathbb{R}^3}(|x|^{-1}*Q^2)
Q^2\,dx,
\]
we obtain from \eqref{eq:2.18} and \eqref{eq:2.19} that
\begin{align}\label{eq:2.20}
&\int_{\mathbb{R}^3}U_R\sqrt{-\Delta+m^2}U_R\,dx-\frac{a}{2}
\int_{\mathbb{R}^3}(|x|^{-1}*U_R^2)U_R^2d\,x\nonumber\\
&\le \frac{R}{\|Q\|_2^2}\Big(\|(-\Delta)^{\frac{1}{4}}Q\|_2^2
-\frac{a}{2\|Q\|_2^2}\int_{\mathbb{R}^3}(|x|^{-1}*Q^2)
Q^2\,dx\Big)\nonumber\\
&+O(R^{-\frac{5}{2}})+m^2O(R^{-1})\nonumber\\
&=\frac{R}{2\|Q\|_2^2}\Big(1
-\frac{a}{\|Q\|_2^2}\Big)\int_{\mathbb{R}^3}(|x|^{-1}*Q^2)
Q^2\,dx+O(R^{-\frac{5}{2}})+m^2O(R^{-1}).
\end{align}
On the other hand, the dominated convergence theorem implies
\begin{align}\label{eq:2.21}\lim_{R\rightarrow\infty}\int_{\mathbb{R}^3}VU_R^2\,dx
&=\lim_{R\to\infty}\int_{\mathbb{R}^3}V(x)\frac{A_R^2R^3}{\|Q\|_2^2}\varphi^2(x-x_0)Q^2(R(x-x_0))\,dx
\nonumber\\
&=\lim_{R\rightarrow\infty}\frac{A_R^2}{\|Q\|_2^2}\int_{\mathbb{R}^3}V(R^{-1}x+x_0)\varphi^2(R^{-1}x)Q^2(x)\,dx
\nonumber\\
&=V(x_0).
\end{align}

Now, we consider separately the cases $a>a^*=\|Q\|_2^2$ and $a=a^*$.

If  $a>a^*$, by \eqref{eq:2.20} and \eqref{eq:2.21},
$$
e(a)\le \limsup_{R\rightarrow\infty}E_a(U_R)=-\infty.
$$
It implies  in this case that $e(a)$ has no minimizers.

If $a=a^*$,  we know from \eqref{eq:2.20} and \eqref{eq:2.21} that
$$e(a^*)\le V(x_0)$$
for all $x_0\in \mathbb{R}^3$. It results
$$
e(a^*)\le \inf_{x\in\mathbb{R}^3}V(x)=0.
$$
By \eqref{eq:2.3}, $e(a^*)\geq 0$, and then  $e(a^*)=0$.

We now show that there are no minimizers of $e(a)$ if $a=a^*$. Suppose, by contradiction, that $u$ is a minimizer of $e(a^*)$, so is $|u|$ by Lemma \ref{lem:A2}. From \eqref{eq:2.3} we obtain
\begin{equation}\label{eq:2.22}
\int_{\mathbb{R}^N}Vu^2\,dx=0,
\end{equation}
which implies
\[
\int_{\mathbb{R}^N}|(-\Delta)^{\frac{1}{4}}|u||^2\,dx=\frac{\|Q\|_2^2}{2}
\int_{\mathbb{R}^N}(|x|^{-1}*u^2)u^2\,dx.
\]

Taking into account the relation in \eqref{eq:1.9}, we know that $|u|$ is also a minimizer of the Weinstein functional $I(u)$ defined in \eqref{eq:1.8a}. After a suitable rescaling
$|u(x)|\rightarrow a|u(bx)|$ for some $a>0$ and $b>0$, $|au(bx)|$ is a nonnegative solution of \eqref{eq:1.7}.
 By Lemma 2.2 in \cite{FL1}, we obtain
\[
|u|\ge C(1+|x|)^{-4},
\]
which is a contradiction to \eqref{eq:2.22}. The assertion follows.
 \qquad$\Box$

 \bigskip

\section{Asymptotic behavior of minimizers}

\bigskip

In this section, we study asymptotic behavior of minimizers $u_a$ as $a\to a^*$. We will see that the $H^{\frac 12}(\mathbb{R}^N)$ norm of $u_a$ tends to infinity if $a\to a^*$. Then, using the blow-up argument, we find the  correct shape of the limit function.

\begin{Lemma} \label{lem:3.1}
If  $u_a$ is a minimizer of $e(a)$, then $\|(-\Delta)^{\frac 14} u_a\|_2\to \infty$
as $a\to a^*$.
\end{Lemma}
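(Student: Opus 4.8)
The plan is to argue by contradiction via the compact embedding of Lemma~\ref{lem:2.1}: if $\|(-\Delta)^{1/4}u_{a}\|_2$ remained bounded along some sequence $a_k\to a^*$, the minimizers $u_{a_k}$ would converge strongly in $L^2(\mathbb{R}^3)$ to a minimizer of $e(a^*)$, which is impossible by Theorem~\ref{thm:1.1}(ii). Before that I would record the auxiliary fact $\lim_{a\to a^*}e(a)=e(a^*)=0$. The lower bound $e(a)\ge0$ is \eqref{eq:2.3}. For the upper bound, fix the trial function $U_R$ of \eqref{eq:2.4} centred at an arbitrary $x_0$; since $a\mapsto E_a(U_R)$ is affine, $\limsup_{a\to a^*}e(a)\le\lim_{a\to a^*}E_a(U_R)=E_{a^*}(U_R)$, and evaluating \eqref{eq:2.20}--\eqref{eq:2.21} at $a=a^*$ (where the term carrying the factor $1-a/\|Q\|_2^2$ disappears) gives
\[
E_{a^*}(U_R)\le O(R^{-5/2})+m^2O(R^{-1})+\int_{\mathbb{R}^3}VU_R^2\,dx .
\]
Letting $R\to\infty$ (so that $\int VU_R^2\to V(x_0)$ by \eqref{eq:2.21}) yields $\limsup_{a\to a^*}e(a)\le V(x_0)$, and taking the infimum over $x_0$ with $\inf_{\mathbb{R}^3}V=0$ gives $\limsup_{a\to a^*}e(a)\le0$, hence the claim.

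Then I would assume, towards a contradiction, that there are $a_k\to a^*$ (necessarily $a_k<a^*$, as minimizers exist only below $a^*$) and $C>0$ with $\|(-\Delta)^{1/4}u_{a_k}\|_2\le C$. Since $\|u_{a_k}\|_2=1$, $\{u_{a_k}\}$ is bounded in $H^{1/2}(\mathbb{R}^3)$; moreover \eqref{eq:2.3} gives $\int_{\mathbb{R}^3}Vu_{a_k}^2\,dx\le E_{a_k}(u_{a_k})=e(a_k)$, which is bounded by the previous step, so $\{u_{a_k}\}$ is in fact bounded in $\mathcal H$. By Lemma~\ref{lem:2.1}, after extracting a subsequence, $u_{a_k}\rightharpoonup u_0$ in $\mathcal H$, $u_{a_k}\to u_0$ in $L^q(\mathbb{R}^3)$ for $2\le q<3$ and a.e.; in particular $\|u_0\|_2=1$ and $u_0\in\mathcal H$. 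By Lemma~\ref{lem:2.2} together with $a_k\to a^*$ the nonlocal term passes to the limit, $\frac{a_k}{2}\int(|x|^{-1}*u_{a_k}^2)u_{a_k}^2\,dx\to\frac{a^*}{2}\int(|x|^{-1}*u_0^2)u_0^2\,dx$; by Lemma~A.4 in \cite{FJL} the quadratic form $u\mapsto\int u\sqrt{-\Delta+m^2}\,u\,dx$ is weakly lower semicontinuous, and by Fatou's lemma (using $V\ge0$ and a.e.\ convergence) $\int Vu_0^2\,dx\le\liminf_k\int Vu_{a_k}^2\,dx$. Since the nonlocal term converges, it detaches from the two lower-semicontinuous pieces inside the $\liminf$, whence
\[
E_{a^*}(u_0)\le\liminf_{k\to\infty}E_{a_k}(u_{a_k})=\liminf_{k\to\infty}e(a_k)=0=e(a^*).
\]
As $\|u_0\|_2=1$, the reverse inequality $E_{a^*}(u_0)\ge e(a^*)$ is automatic, so $u_0$ is a minimizer of $e(a^*)$, contradicting Theorem~\ref{thm:1.1}(ii); this proves the lemma.

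The argument is essentially routine once Lemmas~\ref{lem:2.1}, \ref{lem:2.2} and Theorem~\ref{thm:1.1}(ii) are available. The two points that require a little care --- and which I would regard as the main (still modest) obstacle --- are establishing $\lim_{a\to a^*}e(a)=0$ from the Section~2 trial-function estimates, and the bookkeeping in the last display: because $a_k$ varies one must compare $E_{a_k}(u_{a_k})$ with $E_{a^*}(u_0)$ rather than with a single functional, which is legitimate precisely because the nonlocal term is continuous along the sequence while the kinetic and potential terms are only weakly lower semicontinuous.
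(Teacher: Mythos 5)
Your proposal is correct and follows essentially the same route as the paper: argue by contradiction, use \eqref{eq:2.3} to get $\mathcal{H}$-boundedness of the minimizing sequence, invoke the compact embedding of Lemma~\ref{lem:2.1} and the nonlocal-term continuity of Lemma~\ref{lem:2.2}, establish $\lim_{a\to a^*}e(a)=0$ via the trial functions of Section~2, and then pass to the limit (weak lower semicontinuity of the kinetic and potential parts, strong convergence of the convolution part) to exhibit a minimizer of $e(a^*)$, contradicting Theorem~\ref{thm:1.1}(ii). Your handling of the preliminary step $\lim_{a\to a^*}e(a)=0$ by using the affine dependence $a\mapsto E_a(U_R)$ and then taking the infimum over centers $x_0$ is in fact a touch cleaner than the paper's phrasing (which fixes $x_0$ at a zero of $V$), but it is the same argument in substance.
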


\begin{proof}
We argue indirectly. If the assertion is not true, there would exist a positive constant $C>0$
such that for $\delta>0$ small and $a^*-\delta\leq a\leq a^*$,
\[
\|(-\Delta)^{\frac 14} u_a\|_2\leq C\quad {\rm and}\quad e(a)=E_{a}(u_a).
\]
Hence, there exists a bounded sequence $\{u_{a_k}\}$
in $H^{\frac{1}{2}}(\mathbb{R}^3)$ with $a_k\to a^*$ as $k\to\infty$  so that $e(a_k)=E_{a_k}(u_k)$.

We note from \eqref{eq:2.3} that $e(a)$ is decreasing in $a\in [0,a^*)$ and satisfies $0\le e(a)\le e(0)$.
The boundedness of $e(a)$ for $a\in[0,a^*]$ and  \eqref{eq:2.3} yield
\[
\int_{\mathbb{R}^3}Vu_{a_k}^2\,dx\le C,
\]
which implies that $\{u_{a_k}\}$
is bounded in $\mathcal{H}$. So we may assume $u_{a_k}\rightharpoonup u$ in $\mathcal{H}$, and by Lemma \ref{lem:2.1}, $$
u_{a_k}\to u\quad {\rm in} \quad L^q(\mathbb{R}^3)
$$
as $k\to\infty$ for $2\le q< 3$. Hence, Lemma \ref{lem:2.2} yields
  $$
  \int_{\mathbb{R}^3}(|x|^{-1}*u_{a_k}^2)u_{a_k}^2\,dx\to
  \int_{\mathbb{R}^3}(|x|^{-1}*u^2)u^2\,dx
  $$
as $k\to\infty$.

 By \eqref{eq:2.3},  $e(a_k)\ge 0$. Choosing $a=a_k$ in \eqref{eq:2.20} and $x_0$ being a zero point of $V(x)$, and letting $k\to\infty$ then $R\to\infty$ in \eqref{eq:2.20} and \eqref{eq:2.21}, we deduce that
\[
\limsup_{k\rightarrow \infty}e(a_k)\leq\limsup_{k\rightarrow \infty}E_{a_k}(U_R)\le 0.
\]
Thus, $\lim_{k\rightarrow \infty}e(a_k)=0.$ Consequently,
\[
 0=e(a^*)\le E_{a^*}(u)\le \lim_{k\rightarrow \infty}E_{a_k}(u_{a_k})
 =\lim_{k\rightarrow \infty}e(a_k)=0.
\]
 Therefore, $u$ is the minimizer of $e(a^*)$, a contradiction to $(ii)$ of Theorem \ref{thm:1.1}.

\end{proof}

In order to prove Theorem \ref{thm:1.2}, we commence with the following lemmas.

\begin{Lemma} \label{lem:3.2} Suppose that $V$ satisfies \eqref{eq:1.11}. In the case
either (i) $m\neq0$ and $0<p<1$;
or (ii) $m=0$ and $0<p<\frac 52$; or (iii) $m=0$ and $0<\sum_{i=1}^np_i<5$;
there exist
$M_1>M_2>0$ such that
\begin{equation}\label{eq:3.1}
M_2(a^*-a)^{\frac{p}{p+1}}\le e(a)\le M_1(a^*-a)^{\frac{p}{p+1}}.
\end{equation}
\end{Lemma}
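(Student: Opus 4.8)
The plan is to prove the two inequalities in \eqref{eq:3.1} separately, in each case by comparison with a concentrated rescaling of the ground state $Q$ of \eqref{eq:1.7}. For the \emph{upper bound}, note first that $\kappa_i=+\infty$ whenever $p_i<p$, so $\mathcal Z\neq\emptyset$ and every point of $\mathcal Z$ has the form $x_0=x_{i_0}$ with $p_{i_0}=p$ and $\prod_{j\neq i_0}|x_{i_0}-x_j|^{p_j}=\kappa$; fix such a point. In cases (i) and (ii) take $U_R$ to be the cut-off profile \eqref{eq:2.4} based at $x_0$. Combining \eqref{eq:2.20} with the relations $\int_{\mathbb{R}^3}(|x|^{-1}*Q^2)Q^2\,dx=2\|(-\Delta)^{1/4}Q\|_2^2=2\|Q\|_2^2=2a^*$ — the first being Lemma 5 of \cite{LenLew}, the second obtained by testing \eqref{eq:1.7} with $Q$ — gives
\[
\int_{\mathbb{R}^3}U_R\sqrt{-\Delta+m^2}U_R\,dx-\frac a2\int_{\mathbb{R}^3}(|x|^{-1}*U_R^2)U_R^2\,dx\le\frac{a^*-a}{a^*}R+O(R^{-5/2})+m^2O(R^{-1}).
\]
For the potential term one sharpens \eqref{eq:2.21}: by \eqref{eq:1.11}, $R^pV(R^{-1}y+x_0)=|y|^p\prod_{j\neq i_0}|R^{-1}y+x_{i_0}-x_j|^{p_j}\to\kappa|y|^p$ pointwise as $R\to\infty$, while on the support of the truncation it is bounded by $C|y|^p$; hence by \eqref{eq:1.10} the integrand $R^pV(R^{-1}y+x_0)\varphi^2(R^{-1}y)Q(y)^2$ is dominated by $C(1+|y|)^{p-8}\in L^1(\mathbb{R}^3)$ (as $p<5$), and dominated convergence gives $\int_{\mathbb{R}^3}VU_R^2\,dx=(c_V+o(1))R^{-p}$ with $c_V:=\tfrac{\kappa}{a^*}\int_{\mathbb{R}^3}|y|^pQ(y)^2\,dy>0$. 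Taking $R=(a^*-a)^{-1/(p+1)}$ we obtain
\[
e(a)\le E_a(U_R)\le\Big(\tfrac1{a^*}+c_V+o(1)\Big)(a^*-a)^{\frac{p}{p+1}}+O\big((a^*-a)^{\frac{5}{2(p+1)}}\big)+m^2O\big((a^*-a)^{\frac1{p+1}}\big),
\]
and the error terms are $o\big((a^*-a)^{p/(p+1)}\big)$ under hypothesis (i) or (ii), since then $p<5/2$ and moreover $p<1$ when $m\neq0$; this gives the upper bound in those cases. In case (iii), where $p$ may exceed $5/2$, I instead use the non-truncated profile $U_R:=\|Q\|_2^{-1}R^{3/2}Q(R(x-x_0))$, which lies in $\mathcal H$ since $Q\in H^{1/2}(\mathbb{R}^3)$ and $\int_{\mathbb{R}^3}VU_R^2\,dx<\infty$ by \eqref{eq:1.10} and $\sum_ip_i<5$; because $m=0$ and there is no cut-off, exact homogeneity of $\sqrt{-\Delta}$ yields the same identity with the $O(R^{-5/2})$ and $m^2O(R^{-1})$ terms replaced by $0$, while $\int_{\mathbb{R}^3}VU_R^2\,dx=(c_V+o(1))R^{-p}$ follows as above (now with dominating function $C(1+|y|)^{\sum_ip_i-8}\in L^1$), so the same choice of $R$ gives the upper bound.

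For the \emph{lower bound}, let $u\in\mathcal H$ with $\|u\|_2=1$. By \eqref{eq:2.2} and the sharp Gagliardo--Nirenberg inequality \eqref{eq:1.8} with $C_{gn}=2/\|Q\|_2^2$,
\[
E_a(u)\ge\frac{a^*-a}{a^*}\,\|(-\Delta)^{1/4}u\|_2^2+\int_{\mathbb{R}^3}Vu^2\,dx.
\]
The key is a lower bound for $\int Vu^2\,dx$ in terms of $t:=\|(-\Delta)^{1/4}u\|_2$. From \eqref{eq:1.11} and $p_i\le p$ one checks that, for small $\Lambda$, $\{V\le\Lambda\}$ lies in a union of balls about the $x_i$ and $|\{V\le\Lambda\}|\le C\Lambda^{3/p}$ for $0<\Lambda\le\Lambda_0$. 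Combining this with H\"{o}lder's inequality and the embedding $\|u\|_3\le C\|(-\Delta)^{1/4}u\|_2$ (that is, \eqref{eq:1.7a} with $q=3$), for $0<\Lambda\le\Lambda_0$,
\[
1=\int_{\mathbb{R}^3}u^2\,dx\le\int_{\{V\le\Lambda\}}u^2\,dx+\int_{\{V>\Lambda\}}u^2\,dx\le C_1t^2\Lambda^{1/p}+\Lambda^{-1}\int_{\mathbb{R}^3}Vu^2\,dx.
\]
If $t^2\le(2C_1\Lambda_0^{1/p})^{-1}$, taking $\Lambda=\Lambda_0$ yields $\int_{\mathbb{R}^3}Vu^2\,dx\ge\Lambda_0/2$, hence $E_a(u)\ge\Lambda_0/2$; otherwise, taking $\Lambda=(2C_1t^2)^{-p}<\Lambda_0$ yields $\int_{\mathbb{R}^3}Vu^2\,dx\ge C_2t^{-2p}$, and then
\[
E_a(u)\ge\frac{a^*-a}{a^*}t^2+C_2t^{-2p}\ge\min_{s>0}\Big(\frac{a^*-a}{a^*}s^2+C_2s^{-2p}\Big)=M_2(a^*-a)^{\frac{p}{p+1}}
\]
for a suitable $M_2>0$, the minimizing $s$ being of order $(a^*-a)^{-1/(2p+2)}$ and thus in the admissible range once $a$ is close to $a^*$. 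Since $M_2(a^*-a)^{p/(p+1)}\to0$ as $a\to a^*$, both alternatives give $E_a(u)\ge M_2(a^*-a)^{p/(p+1)}$ for $a$ near $a^*$; taking the infimum over $u$ yields the lower bound, and shrinking $M_2$ if necessary ensures $M_1>M_2$.

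The step I expect to be the main obstacle is the upper bound. Since $\sqrt{-\Delta+m^2}$ is not scale invariant when $m\neq0$ and $Q$ decays only like $|x|^{-4}$, the truncation needed to keep $\int VU_R^2$ finite whenever $\sum_ip_i\ge5$ produces — through the commutator bound \eqref{eq:2.16} already used for Theorem \ref{thm:1.1} — an error of size $R^{-5/2}$, together with an $m^2R^{-1}$ term from the relativistic correction \eqref{eq:2.11}; balancing these against the main term $(a^*-a)^{p/(p+1)}$ at $R\sim(a^*-a)^{-1/(p+1)}$ is exactly what forces the admissible ranges of $p$ in (i)--(ii). Hypothesis (iii) escapes this restriction only because $m=0$ restores exact homogeneity, allowing one to drop the cut-off — legitimate precisely because $\sum_ip_i<5$ — so that both error sources disappear. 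The sublevel-set bound $|\{V\le\Lambda\}|\le C\Lambda^{3/p}$ behind the lower bound is, by contrast, elementary given \eqref{eq:1.11}.
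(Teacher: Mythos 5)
Your proof is correct and reaches the same conclusion, but the route you take for the lower bound is genuinely different from the paper's. For the upper bound your argument coincides with the paper's: in cases (i)--(ii) you use the truncated trial function $U_R$ from \eqref{eq:2.4} together with \eqref{eq:2.20}, in case (iii) the untruncated $\tau^{3/2}\|Q\|_2^{-1}Q(\tau(x-x_0))$, and the dominated-convergence estimate of $\int VU_R^2\,dx$ with the same $R\sim(a^*-a)^{-1/(p+1)}$ scaling; you only forgo the precise constant $(p+1)\mu/(pa^*)$ in \eqref{eq:3.9}, which is not needed for this lemma. For the lower bound, however, the paper writes $E_a(u)\ge(1-a/a^*)\|(-\Delta)^{1/4}u\|_2^2+\gamma-\int(\gamma-V)_+u^2\,dx$, splits $\int(\gamma-V)_+u^2$ by Young's inequality into $\varepsilon^{-4}\int(\gamma-V)_+^4$ and $\varepsilon^{4/3}\int|u|^{8/3}$, estimates $\int(\gamma-V)_+^4\le C\gamma^{4+3/p}$ from the structure of $V$, controls $\int|u|^{8/3}$ by Gagliardo--Nirenberg, and then optimizes in $\varepsilon$ and $\gamma$. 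You instead use the sublevel-set volume bound $|\{V\le\Lambda\}|\le C\Lambda^{3/p}$ together with H\"{o}lder and the $q=3$ endpoint of \eqref{eq:1.7a}, giving directly $1\le C_1t^2\Lambda^{1/p}+\Lambda^{-1}\int Vu^2\,dx$, and then optimize over $\Lambda$ and over $t=\|(-\Delta)^{1/4}u\|_2$. Your version is cleaner and more transparently geometric: it avoids the $L^{8/3}$ interpolation and the auxiliary parameter $\varepsilon$, working at the natural Sobolev endpoint $L^3$ and isolating in one stroke the scaling $|\{V\le\Lambda\}|\sim\Lambda^{3/p}$ that drives the exponent $p/(p+1)$. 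The paper's Young-inequality route is a touch more mechanical and keeps closer track of constants, which mildly aligns with the refined estimates it develops afterward, but for the statement of Lemma \ref{lem:3.2} the two are equivalent and your argument is sound.
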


\begin{proof} Since $e(a)$ is decreasing and bounded for $a\in [0,a^*]$, it suffices to prove the result for $a$ close to $a^*$. By \eqref{eq:2.3}, for any $\gamma>0$ and $u\in\mathcal{H}$ with $\|u\|_2^2=1$, there holds
\begin{equation}\label{eq:3.2}
E_a(u)\ge\Big(1-\frac{a}{a^*}\Big)\int_{\mathbb{R}^3}|(-\Delta)^{\frac{1}{4}}u|^2\,dx
 +\gamma-\int_{\mathbb{R}^3}(\gamma-V)_+u^2\,dx.
\end{equation}
For $\varepsilon>0$, by the Young inequality, we have
\[
\int_{\mathbb{R}^3}(\gamma-V)_+u^2\,dx
\le \frac{1}{4}\varepsilon^{-4}\int_{\mathbb{R}^3}(\gamma-V)_+^4\,dx+
\frac{3}{4}\varepsilon^{\frac{4}{3}}\int_{\mathbb{R}^3}|u|^{\frac{8}{3}}\,dx.
\]
A special case of  the Gagliardo-Nirenberg inequality \eqref{eq:1.7a}
$$
\int_{\mathbb{R}^3}|u|^{\frac{8}{3}}\,dx\le C_1\int_{\mathbb{R}^3}|(-\Delta)^{\frac{1}{4}}u|^2\,dx
\Big(\int_{\mathbb{R}^3}u^2\,dx\Big)^{\frac{1}{3}}
$$
yields that
\[
\int_{\mathbb{R}^3}(\gamma-V)_+u^2\,dx\le
\frac{1}{4}\varepsilon^{-4}\int_{\mathbb{R}^3}(\gamma-V)_+^4\,dx+ \frac{3}{4}\varepsilon^{\frac{4}{3}}C_1\int_{\mathbb{R}^3}|
(-\Delta)^{\frac{1}{4}}u|^2\,dx.
\]

Now, choose $l\in(0,1)$ so that the $n$ balls
$$\{x||x-x_i|^{p_i}\le l\}\,( i=1,2,...,n)
$$
are mutually disjoint.
 Denote $$K=\max_i\, l^{\frac{1-n}{p_i}}.$$
 Choosing $\gamma>0$ such that
 $\gamma<\min\{l^n, \big(\frac{l}{K}\big)^p\}$, we have
 \begin{equation}\label{eq:3.3a}
\{x\in\mathbb{R}^3:\,V(x)\le \gamma\}\subseteq \cup_{i=1}^n\,\{x\in\mathbb{R}^3:\,|x-x_i|\le K\gamma^{\frac1p}\}.
\end{equation}
Indeed, if $x\not\in \cup_{i=1}^n\big\{x\big||x-x_i|^{p_i}\le l\big\}$, then
$$V(x)=\Pi_{i=1}^n|x-x_i|^{p_i}\ge l^n>\gamma.$$
 While if $x\in \{x\in \mathbb{R}^3||x-x_i|^{p_i}\le l\}$, the fact
 $$
 V(x)=\Pi_{i\neq j}|x-x_j|^{p_j}|x-x_i|^{p_i}\le \gamma
 $$
 implies that
 $$l^{n-1}|x-x_i|^{p_i}\le \gamma,$$
 that is
 $$|x-x_i|\le l^{\frac{1-n}{p_i}}\gamma^{\frac{1}{p_i}}\le K\gamma^{\frac{1}{p}}.$$
Thus, for $\gamma>0$ small enough, we get \eqref{eq:3.3a}.

On the other hand, if $x\in \{x||x-x_i|^{p_i}\le K\gamma^{\frac1p}\}$, then
$x\in \{x||x-x_i|^{p_i}\le l\}$ and
\[
 V(x)\ge l^{n-1}|x-x_i|^{p_i}\ge l^{n-1}|x-x_i|^{p}\ge \Big(\frac{|x-x_i|}{K}\Big)^p.
\]
 Hence,
\begin{align}\label{eq:3.3}
&\int_{\mathbb{R}^3}(\gamma-V)_+^4\,dx\nonumber\\
\leq&\sum_{i=1}^n\int_{\{|x-x_i|\le K\gamma^{\frac{1}{p}}\}}
\Big(\gamma-\Big(\frac{|x-x_i|}{K}\Big)^p\Big)^4\,dx\nonumber\\
=&nK^3\gamma^{4+\frac{3}{p}}\int_{\{|x|\le 1\}}(1-|x|^p)^4\,dx
\nonumber\\
=&C_2\gamma^{4+\frac{3}{p}}.
\end{align}
It follows from \eqref{eq:3.2} and \eqref{eq:3.3} that
\[
\begin{split}
&E_a(u)\ge\Big(1-\frac{a}{a^*}\Big)\int_{\mathbb{R}^3}|(-\Delta)^{\frac{1}{4}}u|^2\,dx
 +\gamma\\
&-\frac{3}{4}\varepsilon^{\frac{4}{3}}C_1\int_{\mathbb{R}^3}|(-\Delta)^{\frac{1}{4}}u|^2\,dx-
\frac{1}{4}\varepsilon^{-4}C_2\gamma^{4+\frac{3}{p}}.\\
\end{split}
\]
Choosing $\varepsilon>0$ such that $\frac{3}{4}\varepsilon^{\frac{4}{3}}C_1=1-\frac{a}{a^*}$, we obtain
\[
E_a(u)\ge \gamma-C_3(a^*-a)^{-3}\gamma^{4+\frac{3}{p}}.
\]
Then, taking $\gamma=\big(\frac{p(a^*-a)^3}{(4p+3)C_3}\big)^{\frac{p}{3(p+1)}}$, we see that the inequality
$$
C_2(a^*-a)^{\frac{p}{p+1}}\le e(a)
$$
is valid.

Now we turn to establish the upper bound for $e(a)$. The cases $m = 0 $ and $m\not =0$ are discussed separately. For the case $m=0$,  we distinguish two cases: $(i)\,\sum_{i=1}^np_i<5$, and  $(ii)\,0< p<\frac52$.

In the case $(i)\, \sum_{i=1}^np_i<5$, we choose
$$u_\tau=\frac{\tau^{\frac{3}{2}}}{\|Q\|_2}Q(\tau(x-x_j))$$
with $\tau>0$ and $x_j\in \mathcal{Z}$. In the same way as the proof of \eqref{eq:2.12}, we derive that
\begin{align}\label{eq:3.4}
&\int_{\mathbb{R}^3}u_{\tau}\sqrt{-\Delta+m^2}u_{\tau}\,dx\nonumber\\
&\le\frac{\tau}{\|Q\|_2^2}\int_{\mathbb{R}^3}Q\sqrt{-\Delta}Q\,dx+
\frac{m^2\tau^{-1}}{2\|Q\|_2^2}\int_{\mathbb{R}^3}Q(-\Delta)^{-\frac{1}{2}}Q\,dx
\nonumber\\
&=\frac{\tau}{\|Q\|_2^2}\int_{\mathbb{R}^3}|(-\Delta)^{\frac{1}{4}}Q|^2\,dx+ Cm^2\tau^{-1}
\end{align}
as $\tau\to\infty$, where $C>0$ is a constant depending on $\|Q\|^2_{H^{-\frac 12}(\mathbb{R}^N)}$.
A direct computation yields that
\begin{equation}\label{eq:3.5}
\int_{\mathbb{R}^3}(|x|^{-1}*u_{\tau}^2)u_{\tau}^2\,dx
=\frac{\tau}{\|Q\|_2^4}\int_{\mathbb{R}^3}(|x|^{-1}*Q^2)Q^2\,dx.
\end{equation}
For $\tau>1$, by \eqref{eq:1.10} and the Young inequality,
\begin{equation}\label{eq:3.6}
\big|\tau^pV(\tau^{-1}y+x_j)Q(y)^2\big|
\le C\Big(|y|^{\sum_{i=1}^{n}p_i}+1)(1+|y|)^{-8}.
\end{equation}
Since $\sum_{i=1}^np_i<5$, the function on the right hand side of \eqref{eq:3.6} belongs to $L^1(\mathbb{R}^N)$. So by the Lebesgue dominated convergence theorem, \eqref{eq:1.12} and \eqref{eq:1.13}, we derive
\begin{align}\label{eq:3.7}
\int_{\mathbb{R}^3}Vu_\tau^2\,dx
&=\|Q\|_2^{-2}\int_{\mathbb{R}^3}V(\tau^{-1}y+x_j)Q^2(y)\,dy\nonumber\\
&=\|Q\|_2^{-2}\tau^{-p}\Big(\kappa\int_{\mathbb{R}^3}|y|^pQ^2(y)\,dy+o(1)\Big)
\end{align}
as $\tau\to \infty$.

The Pohozaev identity, see Lemma 5 in \cite{LenLew},
\begin{equation}\label{eq:3.8}
2\int_{\mathbb{R}^3}|(-\Delta)^{\frac{1}{4}}Q|^2\,dx=\int_{\mathbb{R}^3}(|x|^{-1}*Q^2)Q^2\,dx
=2\int_{\mathbb{R}^3}Q^2\,dx,
\end{equation}
allows us to deduce from \eqref{eq:3.4}-\eqref{eq:3.7} and $m=0$ that
\[
\begin{split}
E_a(u_\tau)&\le\frac{\tau}{\|Q\|_2^2}\Big(\int_{\mathbb{R}^3}|(-\Delta)^{\frac{1}{4}} Q|^2\,dx-\frac{a}{2\|Q\|_2^2}\int_{\mathbb{R}^3}(|x|^{-1}*Q^2)Q^2\,dx
\Big)\\
&+\frac{\tau^{-p}\kappa}{\|Q\|_2^2}\int_{\mathbb{R}^3}|y|^pQ^2(y)\,dy
+o(\tau^{-p})\\
&=\frac{(a^*-a)}{a^*}\tau
+\frac{\tau^{-p}\kappa}{a^*}\int_{\mathbb{R}^3}|y|^pQ^2(y)\,dy
+o(\tau^{-p}).
\end{split}
\]

Choosing $\tau=\mu(a^*-a)^{-\frac{1}{p+1}}$ with $\mu$ defined in \eqref{eq:1.14}, we obtain the upper bound for $a$ close to $a^*$. Precisely, we have
\begin{equation}\label{eq:3.9}
\limsup_{a\rightarrow a^*}\frac{e(a)}{(a^*-a)^{\frac{p}{p+1}}}\le \limsup_{\tau\rightarrow \infty}
\frac{E_a(u_\tau)}{(a^*-a)^{\frac{p}{p+1}}}
\le\frac{(p+1)\mu}{pa^*}.
\end{equation}

Next, we consider the case $(ii)\, 0<p<\frac52$.

Let $U_R$ be defined in \eqref{eq:2.4} with $x_0\in \mathcal{Z}$. For $\delta>0$ small enough, we have
\[
V(x)\le C|x-x_0|^p
\]
if $|x-x_0|\le 2\delta.$ Hence,
\[
R^pV\big(R^{-1}x+x_0\big)\varphi^2(R^{-1}x)Q^2(x)\le C|x|^pQ^2(x),
\]
and $|x|^pQ^2(x)$ belongs to $L^1(\mathbb{R}^N)$ since $p<\frac{5}{2}$. Similarly, by the Lebesgue dominated convergence theorem, \eqref{eq:1.12} and \eqref{eq:1.13}, there holds
\begin{align}\label{eq:3.11}
\int_{\mathbb{R}^3}VU_R^2\,dx&=\frac{A_R^2}{\|Q\|_2^2}
\int_{\mathbb{R}^3}V(R^{-1}x+x_0)\varphi(R^{-1}x)^2Q^2(x)\,dx
\nonumber\\
&=\frac{R^{-p}\kappa}{\|Q\|_2^2}\Big(\int_{\mathbb{R}^3}|x|^pQ(x)^2dx+o(1)\Big)
\end{align}
as $R\to \infty.$ It follows from \eqref{eq:2.18}, \eqref{eq:3.11} and the Pohozaev identity \eqref{eq:3.8}
that
\[
\begin{split}
&E_a(U_R)\\
\le&\frac{R}{\|Q\|_2^2}\Big(\int_{\mathbb{R}^3}|(-\Delta)^{\frac{1}{4}} Q|^2\,dx-\frac{a}{2\|Q\|_2^2}\int_{\mathbb{R}^3}(|x|^{-1}*Q^2)Q^2\,dx
\Big)\\
+&\frac{R^{-p}\kappa}{\|Q\|_2^2}\int_{\mathbb{R}^3}|y|^pQ^2(y)\,dy
+o(R^{-p})+O(R^{-\frac{5}{2}})\\
=&\frac{(a^*-a)}{a^*}R
+\frac{R^{-p}\kappa}{a^*}\int_{\mathbb{R}^3}|y|^pQ^2(y)\,dy
+o(R^{-p})+O(R^{-\frac{5}{2}}).
\end{split}
\]
In the same way,  we obtain the upper bound \eqref{eq:3.9}.

 For $m\neq0$ and $0<p<1$, a similar process for the case $m=0$ can be used to derive the upper bound \eqref{eq:3.9}.
 The proof is complete.
\end{proof}

\bigskip


\bigskip

Now, we estimate the nonlocal term.

\begin{Lemma}\label{lem:3.3}
 Under the same conditions of Lemma \ref{lem:3.2}, there exists $K>0$, independent of $a$, such that
\begin{align}\label{eq:3.13}
0<K(a^*-a)^{-\frac{1}{p+1}}\le \int_{\mathbb{R}^3}(|x|^{-1}*u_a^2)u_a^2dx
\le \frac{1}{K}(a^*-a)^{-\frac{1}{p+1}}.
\end{align}\end{Lemma}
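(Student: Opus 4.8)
The plan is to extract both bounds from the energy estimate in Lemma \ref{lem:3.2} together with the Gagliardo--Nirenberg inequality \eqref{eq:1.8}. Write $t_a:=\|(-\Delta)^{\frac14}u_a\|_2^2$ and $N_a:=\int_{\mathbb{R}^3}(|x|^{-1}*u_a^2)u_a^2\,dx$, and recall $a^*=\|Q\|_2^2$, so that the sharp constant in \eqref{eq:1.8} gives $N_a\le\frac{2}{a^*}t_a$. Since $u_a$ is a minimizer with $\|u_a\|_2^2=1$ and $V\ge0$, from \eqref{eq:2.2} and the identity $E_{a}(u_a)=e(a)$ one gets
\[
e(a)\ \ge\ t_a-\frac{a}{2}N_a+\int_{\mathbb{R}^3}Vu_a^2\,dx\ \ge\ \Big(1-\frac{a}{a^*}\Big)t_a\ =\ \frac{a^*-a}{a^*}\,t_a .
\]
Combined with the upper bound $e(a)\le M_1(a^*-a)^{\frac{p}{p+1}}$ from Lemma \ref{lem:3.2}, this yields $t_a\le M_1 a^*(a^*-a)^{-\frac{1}{p+1}}$, and hence via $N_a\le\frac{2}{a^*}t_a$ the desired upper bound $N_a\le \frac1K(a^*-a)^{-\frac{1}{p+1}}$ for a suitable $K>0$ independent of $a$.

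For the lower bound I would argue that $N_a$ cannot be too small, again using $E_a(u_a)=e(a)$ and now the \emph{lower} bound $e(a)\ge M_2(a^*-a)^{\frac{p}{p+1}}$. Dropping the nonnegative potential term is the wrong direction here, so instead I keep it and exploit that the energy is genuinely small: since
\[
M_2(a^*-a)^{\frac{p}{p+1}}\ \le\ e(a)\ =\ \int_{\mathbb{R}^3}u_a\sqrt{-\Delta+m^2}\,u_a\,dx+\int_{\mathbb{R}^3}Vu_a^2\,dx-\frac{a}{2}N_a ,
\]
if $N_a$ were too small then, using \eqref{eq:2.2} to discard the (nonnegative) difference $\sqrt{-\Delta+m^2}-\sqrt{-\Delta}$ and the Gagliardo--Nirenberg inequality $\frac{a}{2}N_a\le\frac{a}{a^*}t_a$, we would obtain $e(a)\ge(1-\frac{a}{a^*})t_a$, which forces $t_a\to\infty$ like $(a^*-a)^{-\frac{1}{p+1}}$ from the lower bound on $e(a)$; but then the defining near-optimality in \eqref{eq:1.8} forces $N_a$ to be comparably large. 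More precisely, optimizing $g(t)=(1-\tfrac{a}{a^*})t+\text{(potential)}-\tfrac{a}{2}N_a$ is circular, so the cleaner route is: first establish $t_a\ge c(a^*-a)^{-\frac{1}{p+1}}$ (by plugging the upper bound on $N_a$ just proved into $e(a)\ge\frac{a^*-a}{a^*}t_a+\frac{a}{2}(\frac{2}{a^*}t_a-N_a)$ — wait, this is still the upper direction), and then I would instead run the blow-up rescaling $v_a(x)=t_a^{-3/4}u_a(t_a^{-1/2}x)$ (or the scaling dictated by \eqref{eq:1.14a}) to see that $\int(|x|^{-1}*v_a^2)v_a^2\,dx$ stays bounded below by a positive constant, which translates back to $N_a\ge c\,t_a$.

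Actually the honest and shortest argument for the lower bound is the following. Test the energy with the scaled trial function $u_\tau$ (or $U_R$) from Lemma \ref{lem:3.2} to recall that $e(a)\le M_1(a^*-a)^{\frac{p}{p+1}}$; separately, from $E_a(u_a)=e(a)$ and \eqref{eq:2.2} plus $V\ge0$,
\[
M_2(a^*-a)^{\frac{p}{p+1}}\ \le\ e(a)\ \ge\ t_a-\frac{a}{2}N_a\ \ge\ t_a-\frac{a^*}{2}N_a
\]
is useless as written; the correct pairing is to combine the two-sided bound on $e(a)$ with the two estimates $t_a\ge \frac{a^*}{a^*-a}\,e(a)$ is false in general — $e(a)\ge\frac{a^*-a}{a^*}t_a$ gives $t_a\le\frac{a^*}{a^*-a}e(a)$. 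So for the lower bound on $N_a$ I will use that $e(a)=t_a+o(t_a)-\frac a2 N_a+\int Vu_a^2$ and that, because $\int Vu_a^2\ge0$ and $e(a)\to0$, necessarily $\frac a2 N_a\ge t_a-e(a)$; hence $N_a\ge\frac{2}{a}(t_a-e(a))$. It therefore suffices to show $t_a\ge c(a^*-a)^{-\frac{1}{p+1}}$, which I will obtain by the blow-up analysis: if $t_a$ grew strictly slower, the rescaled minimizers would, after extraction, converge (by Lemma \ref{lem:2.1}-type compactness adapted to the rescaled problem and the lower semicontinuity from Lemma A.4 of \cite{FJL}) to a function contradicting the lower bound $e(a)\ge M_2(a^*-a)^{\frac{p}{p+1}}$ of Lemma \ref{lem:3.2}. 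The main obstacle is precisely this matching lower bound $t_a\gtrsim(a^*-a)^{-\frac{1}{p+1}}$: it requires controlling the potential term $\int Vu_a^2$ after rescaling, using the polynomial growth \eqref{eq:1.11} and the decay \eqref{eq:1.10} of $Q$ exactly as in \eqref{eq:3.6}, and this is where the hypotheses (i)--(iii) on $p$ (ensuring integrability of $|x|^pQ^2$, resp. $|x|^{\sum p_i}Q^2$) are used.
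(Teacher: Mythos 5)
Your upper bound is essentially the paper's: from $e(a)\ge\big(1-\tfrac{a}{a^*}\big)t_a$ and the sharp Gagliardo--Nirenberg inequality $N_a\le\tfrac{2}{a^*}t_a$, combined with $e(a)\le M_1(a^*-a)^{\frac{p}{p+1}}$, you get $N_a\le 2M_1(a^*-a)^{-\frac{1}{p+1}}$. (The paper chains these slightly differently, bounding $e(a)\ge\frac{a^*-a}{2}N_a$ directly instead of passing through $t_a$, but it is the same estimate.)

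The lower bound, however, has a genuine gap. You never give a complete argument: you try two routes, recognize yourself that both go ``the wrong direction,'' and then gesture at a blow-up/compactness argument ``to be carried out'' without carrying it out. Beyond being incomplete, that route is problematic for two reasons. First, it is circular with respect to the paper's logical structure: the blow-up compactness of Lemma~\ref{lem:3.4} uses Lemma~\ref{lem:3.3} (both bounds on $N_a$ are needed there to show $\lambda_a\mu_a$ stays bounded and away from zero), so you cannot use that machinery to prove Lemma~\ref{lem:3.3}. Second, the key quantitative step you defer --- $t_a\gtrsim(a^*-a)^{-\frac{1}{p+1}}$ --- does not obviously follow from the soft compactness you invoke; what you would get by contradiction is qualitative divergence (as in Lemma~\ref{lem:3.1}), not a rate.

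The paper's lower bound is a short, self-contained comparison argument that you missed, and it is worth knowing. Fix $\theta>0$ with $M_2(1+\theta)^{\frac{p}{p+1}}=2M_1$. For $a$ close to $a^*$ set $b=a-\theta(a^*-a)<a$. Testing $e(b)$ against $u_a$ and using the identity $E_b(u_a)=E_a(u_a)+\frac{a-b}{2}N_a$ gives
\[
e(b)\le e(a)+\frac{a-b}{2}N_a,\qquad\text{i.e.}\qquad \frac{1}{2}N_a\ \ge\ \frac{e(b)-e(a)}{a-b}.
\]
Since $a^*-b=(1+\theta)(a^*-a)$ and $a-b=\theta(a^*-a)$, the two-sided bound on $e(\cdot)$ from Lemma~\ref{lem:3.2} gives
\[
\frac{e(b)-e(a)}{a-b}\ \ge\ \frac{M_2(1+\theta)^{\frac{p}{p+1}}-M_1}{\theta}\,(a^*-a)^{-\frac{1}{p+1}}\ =\ \frac{M_1}{\theta}\,(a^*-a)^{-\frac{1}{p+1}},
\]
which is the desired lower bound. (For $a$ bounded away from $a^*$ one instead compares $E_a(u_a)\le E_a(u_0)$ and $E_0(u_0)\le E_0(u_a)$ to get $N_a\ge N_{u_0}>0$, which already dominates $(a^*-a)^{-\frac{1}{p+1}}$ up to a constant.) I'd encourage you to redo the lower bound along these lines; no blow-up analysis, no rescaling, and no auxiliary bound on $t_a$ are needed.
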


\begin{proof}  Since $\sqrt{-\Delta+m^2}\ge\sqrt{-\Delta}$, by \eqref{eq:1.8},
\begin{align*}
e(a)=E_a(u_a)&\ge \int_{\mathbb{R}^3}\big(|(-\Delta)^{\frac{1}{4}}u_a|^2+Vu_a^2\big)\,dx
 -\frac{a}{2}\int_{\mathbb{R}^3}\big(|x|^{-1}*u_a^2\big)u_a^2\,dx\nonumber\\
  &\ge \frac{a^*-a}{2}\int_{\mathbb{R}^3}\big(|x|^{-1}*u_a^2\big)u_a^2\,dx+\int_{\mathbb{R}^3}Vu_a^2\,dx\nonumber\\
 &\ge \frac{a^*-a}{2}\int_{\mathbb{R}^3}\big(|x|^{-1}*u_a^2\big)u_a^2\,dx,
 \end{align*}
 which together with \eqref{eq:3.1} implies
 \[
 \int_{\mathbb{R}^3}\big(|x|^{-1}*u_a^2\big)u_a^2\,dx\le 2M_1(a^*-a)^{-\frac{1}{p+1}}.
 \]

 To get the lower bound, set $\theta>0$ such that $M_2(1+\theta)^{\frac{p}{p+1}}=2M_1$. Since
\[
e(a)= E_a(u_a)\le E_a(u_0),
\]
that is
\[
\begin{split}
&\int_{\mathbb{R}^3}(u_a\sqrt{-\Delta+m^2}u_a+Vu_a^2)\,dx-\frac{a}{2}
\int_{\mathbb{R}^3}(|x|^{-1}*u_a^2)u_a^2\,dx\\
&\le \int_{\mathbb{R}^3}(u_0\sqrt{-\Delta+m^2}u_0+Vu_0^2)dx-\frac{a}{2}
\int_{\mathbb{R}^3}(|x|^{-1}*u_0^2)u_0^2\,dx\\
\end{split}
\]
 and
\[
 e(0)=E_0(u_0)\le E_0(u_a),
\]
 namely,
 \[
  \int_{\mathbb{R}^3}(u_0\sqrt{-\Delta+m^2}u_0+Vu_0^2)\,dx
 \le \int_{\mathbb{R}^3}(u_a\sqrt{-\Delta+m^2}u_a+Vu_a^2)\,dx,
\]
 we obtain for $a\in [0,\frac{\theta}{1+\theta}a^*)$ that
\[
\begin{split}
 &\int_{\mathbb{R}^3}\big(|x|^{-1}*u_a^2\big)u_a^2\,dx\\
 \ge& \int_{\mathbb{R}^3}\big(|x|^{-1}*u_0^2\big)u_0^2\,dx\\
 \ge& \Big(\frac{a^*}{1+\theta}\Big)^{\frac{1}{p+1}}(a^*-a)^{-\frac{1}{p+1}}\int_{\mathbb{R}^3}
 \big(|x|^{-1}*u_0^2\big)u_0^2\,dx.
 \end{split}
\]
 If $a\in[\frac{\theta}{1+\theta}a^*,a^*)$, let $b=a-\theta(a^*-a)\in [0,a)$. Then, the  fact
\[
\begin{split}
 e(b)\le E_b(u_a)&=E_a(u_a)+\frac{a-b}{2}\int_{\mathbb{R}^3}\big(|x|^{-1}*u_a^2\big)
 u_a^2\,dx\\
 &=e(a)+\frac{a-b}{2}\int_{\mathbb{R}^3}\big(|x|^{-1}*u_a^2\big)
 u_a^2\,dx
 \end{split}
\]
 and \eqref{eq:3.1} yield
\[
\begin{split}
 &\frac{1}{2}\int_{\mathbb{R}^3}\big(|x|^{-1}*u_a^2\big)
 u_a^2\,dx\nonumber\\
 &\ge\frac{e(b)-e(a)}{a-b}\nonumber\\
 &\ge \frac{M_2(a^*-b)^{\frac{p}{p+1}}-M_1(a^*-a)^{\frac{p}{p+1}}}{a-b}\nonumber\\
 &=\frac{M_2(1+\theta)^{\frac{p}{p+1}}- M_1}{\theta}(a^*-a)^{-\frac{1}{p+1}}
 =\frac{M_1}{\theta}(a^*-a)^{-\frac{1}{p+1}}.
  \end{split}
\]
The assertion follows.
\end{proof}

\bigskip

Let $u_a$ be a non-negative minimizer of \eqref{eq:1.5} and $\lambda_a=(a^*-a)^{\frac{1}{p+1}}.$
We now use the blow-up argument to analyze the collapse concentration of the minimizer $u_a$ as $a\to a^*$.
By \eqref{eq:2.3},
\[
\int_{\mathbb{R}^3}|(-\Delta)^{\frac{1}{4}}u_a|^2\,dx\le \frac{a^*}{a^*-a}E_a(u_a)
=\frac{a^*}{a^*-a}e(a)
\]
and
\[
\int_{\mathbb{R}^3}Vu_a^2\,dx\le E_a(u_a)=e(a).
\]
Then,  Lemma \ref{lem:3.2} gives
\begin{equation*}
\int_{\mathbb{R}^3}|(-\Delta)^{\frac{1}{4}}u_a|^2\,dx
\le \frac{a^*}{a^*-a}M_1(a^*-a)^{\frac{p}{p+1}}
\le C\lambda_a^{-1}
\end{equation*}
and
\[
\int_{\mathbb{R}^3}Vu_a^2\,dx\le C(a^*-a)^{\frac{p}{p+1}}= C\lambda_a^p.
\]
For $1\le i\le n$, we define
\begin{equation}\label{eq:3.16}
w_a^{i}(x)=\lambda_a^{\frac{3}{2}}u_a(\lambda_a x+x_i)
\end{equation}
with $\|w_a^{i}\|_2=\|u_a\|_2=1$. It is readily to see that
\begin{equation}\label{eq:3.17}
\int_{\mathbb{R}^3}|(-\Delta)^{\frac{1}{4}}w_a^{i}|^2\,dx=\lambda_a\int_{\mathbb{R}^3}\big|(-\Delta)^{\frac{1}{4}}
u_a\big|^2\,dx\le C,
\end{equation}
and
\begin{equation}\label{eq:3.18}
\int_{\mathbb{R}^3}V(\lambda_a x+x_i)w_a^{i}(x)^2\,dx\le C\lambda_a^p.
\end{equation}
For $\gamma>0$,
$$
\int_{\{x\in \mathbb{R}^3: V(x)\ge \gamma\lambda_a^p\}}u_a^2\,dx\le \frac{1}{\gamma\lambda_a^p}
\int_{\mathbb{R}^3}Vu_a^2\,dx\le\frac{C}{\gamma},
$$
which implies that
$$
\int_{\{x\in \mathbb{R}^3: V(x)\le \gamma\lambda_a^p\}}u_a^2\,dx=1-\int_{\{x\in \mathbb{R}^3: V(x)\ge \gamma\lambda_a^p\}}u_a^2\,dx\ge 1-\frac{C}{\gamma}.
$$
If $\lambda_a>0$ small, as in the proof of \eqref{eq:3.3a}, we have
$$
\{x\in \mathbb{R}^3: V(x)\le \gamma \lambda_a^p\}\subset \cup_{i=1}^n\{x\in \mathbb{R}^3:|x-x_i|\le C\gamma^{\frac{1}{p}}\lambda_a\},
$$
where $\{x\in \mathbb{R}^3: |x-x_i|\le C\gamma^{\frac{1}{p}}\lambda_a\}$ are mutually disjoint. Therefore,
\[
\begin{split}
&\int_{\{x\in \mathbb{R}^3: V(x)\le \gamma\lambda_a^p\}}u^2_a(x)\,dx\\
\le &\sum_{i=1}^{n}\int_{\{x\in \mathbb{R}^3: |x-x_i|\le C\gamma^{\frac{1}{p}}\lambda_a\}}u^2_a(x)\,dx\\
=&\sum_{i=1}^{n}\int_{\{x\in \mathbb{R}^3:|x|\le C\gamma^{\frac{1}{p}}\}}w_a^{i}(x)^2\,dx\le1,\\
\end{split}
\]
which implies
\begin{equation}\label{eq:3.19}
 1-\frac{C}{\gamma}\le\sum_{i=1}^{n}\int_{\{|x|\le C\gamma^{\frac{1}{p}}\}}w_a^{i}(x)^2\,dx\le1,
\end{equation}
for $a$ close to $a^*$.

By \eqref{eq:3.17}, $\{w_a^{i}\}$ is bounded  in $H^{\frac{1}{2}}(\mathbb{R}^3)$. Assuming $a_k\to a^*$ as $k\to\infty$, correspondingly, we have
\[
w_{a_k}^{i}\rightharpoonup w_0^{i}\quad{\rm in}\quad H^{\frac{1}{2}}(\mathbb{R}^3),\quad w_{a_k}^{i}\to w_0^{i}\quad{\rm in}\quad L^q_{loc}(\mathbb{R}^3)
\]
as $k\to\infty$ for $2\le q<3$.
Replacing $w_a^{i}$ in \eqref{eq:3.19} by $w_{a_k}^{i}$, and letting $k\to\infty$, then $\gamma\to\infty$, we obtain
\[
\sum_{i=1}^{n}\int_{\mathbb{R}^3}w_0^{i}(x)^2\,dx=1.
\]
Then, there exists $j$ such that $w_0^{j}\ge0$ and  $w_0^{j}\not\equiv0$. We claim that
\[
w_{a_k}^{j}\to w_0^{j}\quad{\rm in}\quad L^2(\mathbb{R}^3)
\]
as $k\to\infty$. This is the case if we may show $\|w_0^{j}\|_2^2=1$. So it suffices to prove the following result.
\begin{Lemma}\label{lem:3.4}
There exist $\beta>0$ and $y_0\in \mathbb{R}^3$,
and a radially decreasing and positive ground state $Q$ of \eqref{eq:1.7} such that
\[
w_0^{j}=\frac{\beta^{3}}{\|Q\|_2}Q(\beta^2(x-y_0)),
\]
and then $\|w_0^{j}\|_2^2=1$. Moreover,
\[
w_{a_k}^{j}\to w_0^{j}\quad{\rm in}\quad L^2(\mathbb{R}^3)
\]
as $k\to\infty$.
\end{Lemma}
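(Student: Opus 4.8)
The plan is to run a blow-up analysis on the Euler--Lagrange equation \eqref{eq:1.10a} satisfied by $u_a$, to pin down the limit profile through the least-mass property of ground states (Lemma \ref{lem:A3}) so as to bypass the open uniqueness question for \eqref{eq:1.7}, and finally to upgrade weak convergence to strong $L^2$-convergence using conservation of mass. Write $w_a:=w_a^{j}$ and $w_0:=w_0^{j}$. First I would record the equation solved by $w_a$. Testing \eqref{eq:1.10a} with $u_a$ gives the multiplier $\mu_a=e(a)-\frac a2\int_{\mathbb{R}^3}(|x|^{-1}*u_a^2)u_a^2\,dx$, and since a change of variables gives $\int_{\mathbb{R}^3}(|x|^{-1}*w_a^2)w_a^2\,dx=\lambda_a\int_{\mathbb{R}^3}(|x|^{-1}*u_a^2)u_a^2\,dx$, Lemma \ref{lem:3.2} makes $\lambda_a e(a)\to0$ while Lemma \ref{lem:3.3} traps $\int_{\mathbb{R}^3}(|x|^{-1}*w_a^2)w_a^2\,dx$ in a fixed compact subinterval of $(0,\infty)$; hence along $a_k\to a^*$, after extracting a further subsequence, $\mu_{a_k}\lambda_{a_k}\to-\ell$ with $\ell>0$. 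Carrying out on $\sqrt{-\Delta+m^2}$ the same translation and dilation as in \eqref{eq:2.8}--\eqref{eq:2.10}, now with parameter $\lambda_a$, one finds that $w_a$ solves
\[
\sqrt{-\Delta+\lambda_a^2 m^2}\,w_a+\lambda_a\, V(\lambda_a x+x_j)\,w_a=\mu_a\lambda_a\, w_a+a\,(|x|^{-1}*w_a^2)\,w_a\qquad\text{in }\mathbb{R}^3 .
\]

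Next I would pass to the limit in the weak form of this equation against $\phi\in C_0^\infty(\mathbb{R}^3)$. Since $\|\sqrt{-\Delta+\lambda_{a_k}^2 m^2}\,\phi-\sqrt{-\Delta}\,\phi\|_2\le\lambda_{a_k}|m|\,\|\phi\|_2\to0$ and $w_{a_k}\rightharpoonup w_0$ in $L^2(\mathbb{R}^3)$ (a consequence of weak convergence in $H^{\frac12}$), the leading term converges to $\langle\sqrt{-\Delta}\,w_0,\phi\rangle$. The potential term drops out: using \eqref{eq:3.18} and $V(x_j)=0$,
\[
\Big|\int_{\mathbb{R}^3}\lambda_{a_k} V(\lambda_{a_k}x+x_j)\,w_{a_k}\phi\,dx\Big|\le\lambda_{a_k}\Big(\int_{\mathbb{R}^3} V(\lambda_{a_k}x+x_j)\,w_{a_k}^2\,dx\Big)^{1/2}\Big(\int_{\mathbb{R}^3} V(\lambda_{a_k}x+x_j)\,\phi^2\,dx\Big)^{1/2}\to0 .
\]
The linear term converges to $-\ell\langle w_0,\phi\rangle$, and the Hartree term is handled via the Hardy--Littlewood--Sobolev inequality: the $H^{\frac12}$-bound and local strong convergence give $w_{a_k}^2\rightharpoonup w_0^2$ in $L^{6/5}(\mathbb{R}^3)$, hence $|x|^{-1}*w_{a_k}^2\rightharpoonup|x|^{-1}*w_0^2$ in $L^{6}(\mathbb{R}^3)$, while $w_{a_k}\phi\to w_0\phi$ strongly in $L^{6/5}(\mathbb{R}^3)$, so with $a_k\to a^*$ this term converges to $a^*\langle(|x|^{-1}*w_0^2)w_0,\phi\rangle$. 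Therefore $w_0$ solves
\[
\sqrt{-\Delta}\,w_0+\ell w_0=a^*\,(|x|^{-1}*w_0^2)\,w_0\qquad\text{in }\mathbb{R}^3 .
\]

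Now I would read off the profile. The rescaled function $\tilde w(x):=\ell^{-3/2}\|Q\|_2\,w_0(x/\ell)$ solves exactly \eqref{eq:1.7}: substituting $w_0(x)=\ell^{3/2}\|Q\|_2^{-1}\tilde w(\ell x)$ into the last display and matching the three coefficients forces precisely the dilation factor $1/\ell$ and the amplitude $\ell^{-3/2}\|Q\|_2$ (recall $\|Q\|_2^2=a^*$). As $w_0\ge0$ and $w_0\not\equiv0$, $\tilde w$ is a nontrivial solution of \eqref{eq:1.7}, so Lemma \ref{lem:A3} gives $\|\tilde w\|_2^2\ge\|Q\|_2^2=a^*$; on the other hand $\|\tilde w\|_2^2=a^*\|w_0\|_2^2$ while $\|w_0\|_2^2\le\sum_{i=1}^n\|w_0^i\|_2^2=1$, whence $\|w_0\|_2^2=1$ and $\|\tilde w\|_2^2=\|Q\|_2^2$. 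Moreover the Nehari and Pohozaev identities for \eqref{eq:1.7} give $\int_{\mathbb{R}^3}|(-\Delta)^{\frac14}\tilde w|^2\,dx=\|\tilde w\|_2^2=\tfrac12\int_{\mathbb{R}^3}(|x|^{-1}*\tilde w^2)\tilde w^2\,dx$, so $\|\tilde w\|_2^2=\|Q\|_2^2$ means $\tilde w$ realizes equality in \eqref{eq:1.8}; by the classification of optimizers in \cite{FL1} (see also \eqref{eq:1.9}) $\tilde w$ equals, after a translation, a positive radially decreasing ground state $Q$, i.e. $\tilde w(x)=Q(x-z_0)$. Undoing the rescaling yields $w_0(x)=\ell^{3/2}\|Q\|_2^{-1}Q(\ell(x-y_0))$ with $y_0=z_0/\ell$, which is the asserted form with $\beta=\ell^{1/2}$; in particular $\|w_0^{j}\|_2^2=1$. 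Finally, $w_{a_k}^{j}\rightharpoonup w_0^{j}$ in $L^2(\mathbb{R}^3)$ together with $\|w_{a_k}^{j}\|_2=1=\|w_0^{j}\|_2$ yields $w_{a_k}^{j}\to w_0^{j}$ strongly in $L^2(\mathbb{R}^3)$, since in a Hilbert space weak convergence plus convergence of norms implies strong convergence.

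The main obstacle is the first half of the argument: transporting the translation and the dilation correctly through the nonlocal operator $\sqrt{-\Delta+m^2}$ to reach the rescaled equation with the harmless remainder $\sqrt{-\Delta+\lambda_a^2 m^2}$, and then justifying the limit passage both in the operator (via $\sqrt{-\Delta+\lambda_a^2 m^2}\to\sqrt{-\Delta}$) and in the Hartree term with only weak $L^2$- and local strong convergence available. By contrast, everything downstream of the limiting equation is soft: it rests solely on the least-mass property in Lemma \ref{lem:A3} and on the sharp constant in \eqref{eq:1.8}, which is precisely what circumvents the unresolved uniqueness of ground states for \eqref{eq:1.7}.
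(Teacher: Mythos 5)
Your proof is correct and follows essentially the same line as the paper: pass the rescaled Euler--Lagrange equation for $w_a^{j}$ to the limit, identify a solution of \eqref{eq:1.7} after dilation, squeeze the $L^2$ mass between Lemma \ref{lem:A3} and weak lower semicontinuity, and conclude strong $L^2$ convergence from the norm identity. Two minor technical points differ, both equivalent to the paper's treatment: you handle the Hartree term by pairing $|x|^{-1}*w_{a_k}^2\rightharpoonup|x|^{-1}*w_0^2$ in $L^6$ against $w_{a_k}\varphi\to w_0\varphi$ in $L^{6/5}$, which is cleaner than the paper's cutoff-plus-Newton's-theorem argument in \eqref{eq:3.29}--\eqref{eq:3.32}; and you close the profile identification by noting that the mass identity forces equality in \eqref{eq:1.8} (hence a Gagliardo--Nirenberg optimizer, classified in \cite{FL1}), whereas the paper invokes the second half of Lemma \ref{lem:A3} to get that $w$ is a ground state and then cites Theorem~1.1 of \cite{FL1} directly -- both routes land on the same classification.
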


\begin{proof} We know that the minimizer $u_a$ satisfies
$$\sqrt{-\Delta+m^2}u_a+Vu_a-a(|x|^{-1}*u_a^2)u_a=\mu_au_a,$$
where $\mu_a$ is the Lagrange multiplier.
Hence, we have
\[
\mu_a=e(a)-\frac{a}{2}\int_{\mathbb{R}^3}(|x|^{-1}*u_a^2)u_a^2\,dx.
 \]
By Lemmas \ref{lem:3.2} and \ref{lem:3.3}, $\lambda_a\mu_a$ is negative and bounded for $a$ close to $a^*$. So we may assume
that there exist $\beta>0$ and a sequence $\{a_k\}$ such that
$$
\lambda_{a_k}\mu_{a_k}\rightarrow -\beta^2
$$
as $k\to \infty$.
By the definition of $\sqrt{-\Delta+m^2}$, we deduce
\[
\begin{split}
&\lambda_a^{\frac{3}{2}}\sqrt{-\Delta+m^2}u_a(\lambda_ax+x_j)\\
=&\lambda_a^{\frac{3}{2}}\mathcal{F}^{-1}\big(\sqrt{\xi^2+m^2}\mathcal{F}(u_a)\big)(\lambda_ax+x_j)\\
=&(2\pi)^{-3}\lambda_a^{\frac{3}{2}}\int_{\mathbb{R}^3}{\rm e}^{i(\lambda_ax+x_j)\cdot\xi}\sqrt{\xi^2+m^2}\,d\xi
\int_{\mathbb{R}^3}{\rm e}^{-i\xi\cdot y}u_a(y)\,dy\\
=&(2\pi)^{-3}\int_{\mathbb{R}^3}{\rm e}^{ix\cdot\xi}\sqrt{\lambda_a^{-2}\xi^2+m^2}\,d\xi
\int_{\mathbb{R}^3}{\rm e}^{-i\xi\cdot y}w_a^{j}(y)\,dy\\
=&\mathcal{F}^{-1}\Big(\sqrt{\lambda_a^{-2}\xi^2+m^2}F(w_a^{j})\Big)(x)\\
=&\lambda_a^{-1}\sqrt{-\Delta+\lambda_a^2m^2}w_a^{j}(x).
\end{split}
\]
In the same way, we have
\[
\begin{split}
&a\lambda_a^{\frac{3}{2}}\Big(\big(|x|^{-1}*u_a^2\big)u_a\Big)(\lambda_ax+x_j)\\
=&a\lambda_a^{\frac{3}{2}}u_a(\lambda_ax+x_j)\int_{\mathbb{R}^3}\frac{1}{|\lambda_ax+x_j-y|}u_a^2(y)\,dy\\
=&a\lambda_a^{-1}w_a^{j}(x)\int_{\mathbb{R}^3}\frac{1}{|x-y|}(w_a^{j})^2(y)\,dy \\
=&a\lambda_a^{-1}\Big(|x|^{-1}*\big(w_a^{j}\big)^2
\Big)w_a^{j}(x).
\end{split}
\]
Hence, the function  $w_a^{j}$ satisfies
\begin{align}
&\sqrt{-\Delta+\lambda_a^2m^2}w_a^{j}+\lambda_a V(\lambda_a x+x_j)w_a^{j}\nonumber\\
= &a\Big(|x|^{-1}*
\big(w_a^{j}\big)^2\Big)w_a^{j}+\lambda_a\mu_aw_a^{j}.\nonumber
\end{align}

For $\varphi\in C_c(\mathbb{R}^3)$, we first estimate
\begin{align}\label{eq:3.23}
&\quad\int_{\mathbb{R}^3}\big(\sqrt{-\Delta+\lambda_{a_k}^2m^2}w_{a_k}^{j}-
\sqrt{-\Delta}w_0\big)\varphi\,dx
\nonumber\\
&=\int_{\mathbb{R}^3}\sqrt{\xi^2+\lambda_{a_k}^2m^2}
F\big(w_{a_k}^{j}\big)\mathcal{F}(\varphi) -\int_{\mathbb{R}^3}|\xi|\mathcal{F}(w_0^{j})\big)\mathcal{F}(\varphi)\,d\xi
\nonumber\\
&=\int_{\mathbb{R}^3}\Big(\sqrt{\xi^2+\lambda_{a_k}^2m^2}-|\xi|\Big)\mathcal{F}\big(w_{a_k}^{j}\big)\mathcal{F}(\varphi)\,d\xi
+\int_{\mathbb{R}^3}|\xi|\mathcal{F}\Big(w_{a_k}^{j}-w_0^{j}\Big)F(\varphi)\,d\xi
\nonumber\\
&= I +II.
\end{align}

We write
\[
\begin{split}
II
&= \int_{\mathbb{R}^3}\mathcal{F}^{-1}\big(\sqrt{|\xi|}\mathcal{F}\big(w_{a_k}^{j}-
w_0^{j}\big)\Big)\mathcal{F}^{-1}\big(\sqrt{|\xi|}\mathcal{F}(\varphi)\big)\,dx\\
&= \int_{\mathbb{R}^3}(-\Delta)^{\frac{1}{4}}
\big(w_{a_k}^{j}-w_0^{j}\big)(-\Delta)^{\frac{1}{4}}\varphi\,dx.\\
\end{split}
\]
The weak convergence $w_{a_k}^{j}\rightharpoonup w_0^{j} $ in $H^{\frac{1}{2}}(\mathbb{R}^3)$ yields
\begin{align}\label{eq:3.24}
\int_{\mathbb{R}^3}(-\Delta)^{\frac{1}{4}}
\big(w_{a_k}^{j}-w_0^{j}\big)(-\Delta)^{\frac{1}{4}}\varphi\, dx\rightarrow 0
\end{align}
as $k\rightarrow \infty$. On the other hand,
\[
I =\int_{\mathbb{R}^3}\frac{\lambda_{a_k}^2m^2}{\sqrt{\xi^2+\lambda_{a_k}^2m^2}+|\xi|}
\mathcal{F}(w_{a_k}^{j})\mathcal{F}(\varphi)\,d\xi.
\]
By the H\"{o}lder inequality and the Plancherel theorem, we infer that
\begin{align}\label{eq:3.25}
|I|&\le
|\lambda_{a_k}m|\int_{\mathbb{R}^3}|\mathcal{F}(w_{a_k}^{j})\mathcal{F}(\varphi)|\,dx
\nonumber\\
&\le |\lambda_{a_k}m|\|F(w_{a_k}^j)\|_2\|F(\varphi)\|_2
\nonumber\\
&=|\lambda_{a_k}m|\big\|w_{a_k}^{j}\big\|_2\|\varphi\|_2
\nonumber\\
&=|\lambda_{a_k}m|\|\varphi\|_2\rightarrow 0
\end{align}
as $k\to \infty$.
As a result of \eqref{eq:3.23}--\eqref{eq:3.25}, we have
\begin{equation}\label{eq:3.26}
\int_{\mathbb{R}^3}\big(\sqrt{-\Delta+\lambda_{a_k}^2m^2}w_{a_k}^{j}-
\sqrt{-\Delta}w_0^{j}\big)\varphi\, dx\rightarrow 0
\end{equation}
as $k\to \infty$.

Next, by \eqref{eq:3.18} and the H\"{o}lder inequality, we estimate
\begin{align*}
&\Big|\int_{\mathbb{R}^3}\lambda_{a_k} V\big(\lambda_{a_k} x+x_j\big)w_a^{j}\varphi \,dx\Big|
\nonumber\\
&\le
\lambda_{a_k}\Big(\int_{\mathbb{R}^3}V\big(\lambda_{a_k} x+x_j\big)w_{a_k}^{(j)}(x)^2dx\Big)^{\frac{1}{2}}
\Big(\int_{\mathbb{R}^3}V\big(\lambda_{a_k} x+x_j\big)\varphi(x)^2\,dx\Big)^{\frac{1}{2}}
\nonumber\\
&\le C\lambda_{a_k}^{\frac{p}{2}+1}\sup_{x\in supp\{\varphi\}}V(\lambda_{a_k} x+x_j)^{\frac{1}{2}}\|\varphi\|_2,
\end{align*}
which tends to zero as $k\to \infty$.

Finally, the Hardy-Littlewood-Soblev  inequality allows us to show

\begin{align}\label{eq:3.28}
\int_{\mathbb{R}^3}\big(|x|^{-1}*(w_{a_k}^{j})^2\big)\big(w_{a_k}^{j}-w_0^{j}\big)\varphi\, dx\le
C\big\|w_{a_k}\big\|_{\frac{12}{5}}^2\big\|\big(w_{a_k}^{j}-w_0^{j}\big)\varphi\big\|_{\frac{6}{5}},
\end{align}
and then it tends to zero as $k\to \infty$ since $w_{a_k}^{j}\to w_0^{j}$ in $L^q_{loc}(\mathbb{R}^3)$.
Now, choose a non-negative function $\Psi_R\in C_c(\mathbb{R}^3)$ such that
 $$\Psi_R\equiv1,\,x\in B_R;\,\,\Psi_R\equiv0,\,x\in \mathbb{R}^3\backslash B_{2R}$$
 with  $R>0$.
Arguing as \eqref{eq:3.28}, we find
 \begin{align}\label{eq:3.29}
 &\quad\Big|\int_{\mathbb{R}^3}|x|^{-1}*(w_0^{j}\varphi)((w_{a_k}^{j})^2
 -(w_0^{j})^2)
 \Psi_Rdx\Big|\nonumber\\
 &\le C\|w_0^{j}\varphi\|_{\frac{6}{5}}\Big\|\big((w_{a_k}^{j})^2-(w_0^{j})^2\big)\Psi_R\Big\|_{\frac{6}{5}}
 \nonumber\\
 &\le C\|w_0^{j}\varphi\|_{\frac{6}{5}}\big\|\big(w_{a_k}^{j}+w_0^{j}\big)\Psi_R\big\|_{\frac{12}{5}}
\big\|\big(w_{a_k}^{j}-w_0^{j}\big)\Psi_R\big\|_{\frac{12}{5}}
\nonumber\\
&\le C\|(w_{a_k}^{j}-w_0^{j}\|_{L^{\frac{12}{5}}(B_{2R})},
 \end{align}
it goes to zero as $k\to \infty$. By the Newton theorem, we obtain
 \begin{align}\label{eq:3.30}
 &\quad\Big|\int_{\mathbb{R}^3}|x|^{-1}*(w_0^{j}\varphi)\Big((w_{a_k}^{j})^2-(w_0^{j})^2\Big)
 (1-\Psi_R)\,dx\Big|
 \nonumber\\
 &\leq\int_{\mathbb{R}^3\setminus B_R}|x|^{-1}*\big|w_0^{j}\varphi\big|\Big|\Big((w_{a_k}^{j})^2-(w_0^{j})^2\Big)
 (1-\Psi_R)\Big|\,dx
 \nonumber\\
 &\le C\int_{\mathbb{R}^3\setminus B_R}|x|^{-1}\int_{\mathbb{R}^3}\big|w_0^{j}\varphi(y) \big|\,dy\Big|\Big((w_{a_k}^{j})^2-(w_0^{j})^2\Big)
 (1-\Psi_R)\Big|\,dx
 \nonumber\\
 &\le CR^{-1}\|w_0^{j}\|_2^2\|\varphi\|_2^2(\|w_{a_k}^{j}\|_2^2+\|w_0^{j}\|_2^2)
 \nonumber\\
 &\le CR^{-1}.
 \end{align}
By the Fubini theorem,
 \begin{align}\label{eq:3.31}
&\int_{\mathbb{R}^3}|x|^{-1}*((w_{a_k}^{j})^2-(w_0^{j})^2)w_0^{j}\varphi\,dx\nonumber\\
&=\int_{\mathbb{R}^3}|x|^{-1}*(w_0^{j}\varphi)\big((w_{a_k}^{j})^2-(w_0^{j})^2\big)\,dx
 \nonumber\\
 &=\int_{\mathbb{R}^3}|x|^{-1}*(w_0^{j}\varphi)\big((w_{a_k}^{j})^2-(w_0^{j})^2\big)
 \Psi_R\,dx\nonumber\\
 &+\int_{\mathbb{R}^3}|x|^{-1}*(w_0^{j}\varphi)\big((w_{a_k}^{j})^2-(w_0^{j})^2\big)
 (1-\Psi_R)\,dx.
 \end{align}
We deduce from \eqref{eq:3.29}--\eqref{eq:3.31} that
 $$\limsup_{k\rightarrow\infty}\Big|\int_{\mathbb{R}^3}|x|^{-1}*
 ((w_{a_k}^{j})^2-(w_0^{j})^2)w_0^{j}\varphi \,dx\Big|\le CR^{-1}.$$
Letting $R\to\infty$, we obtain
\begin{equation}\label{eq:3.32}\int_{\mathbb{R}^3}|x|^{-1}*
 ((w_{a_k}^{j})^2-(w_0^{j})^2)w_0^{j}\varphi\, dx\rightarrow0
 \end{equation}
as $k\to \infty$.
 It follows from \eqref{eq:3.28} and \eqref{eq:3.32} that
\[
\begin{split}
&\quad\int_{\mathbb{R}^3}\Big(\big(|x|^{-1}*(w_{a_k}^{j})^2\big)w_{a_k}^{j}-
\big(|x|^{-1}*(w_0^{j})^2\big)w_0^{j}\Big)\varphi\, dx\nonumber\\
&=\int_{\mathbb{R}^3}\big(|x|^{-1}*(w_{a_k}^{j})^2\big)
\big(w_{a_k}^{j}-w_0^{j}\big)\varphi\, dx\nonumber\\
&+\int_{\mathbb{R}^3}|x|^{-1}*\big((w_{a_k}^{j})^2-(w_0^{j})^2\big)w_0^{j}\varphi\, dx
\rightarrow0
\end{split}
\]
as $k\to \infty$.
Consequently,  $w_0^{j}$ satisfies
\begin{align*}
\sqrt{-\Delta}w_0^{j}-a^*(|x|^{-1}*(w_0^{j})^2)w_0^{j}=-\beta^2w_0^{j}.
\end{align*}

Let $w_0^{j}=\frac{\beta^3w(\beta^2(x-y_0))}{\|Q\|_2}$. Then $w$ satisfies \eqref{eq:1.7}. By Lemma \ref{lem:A3}, we have $\|w\|_2\ge\|Q\|_2$. On the other hand, due to the weak semi-continuity of the $L^2(\mathbb{R}^3)$ norm and $\|w_{a_k}^j\|_2^2=1$, we have $\|w_0^j\|_2^2\le1$, and then $\|w\|_2^2\le\|Q\|_2^2$. Thus, we get $\|w\|_2^2=\|Q\|_2^2$. Again by Lemma \ref{lem:A3}, $w$ is a ground state of \eqref{eq:1.7}. Thanks to Theorem 1.1 in \cite{FL1}, there exists a radial decreasing and positive ground state $Q$ of \eqref{eq:1.7} and  $y_0\in \mathbb{R}^3$ such that $w=Q(x-y_0).$
Thus, we conclude
$$w_0^{j}=\frac{\beta^3Q(\beta^2(x-y_0))}{\|Q\|_2},$$
and $\|w_0^{j}\|_2^2=1$. The proof is complete.

\end{proof}

Now, we are ready to prove Theorem \ref{thm:1.2}.

{\bf Proof of Theorem \ref{thm:1.2}}  We know by Lemma \ref{lem:3.4} that
$$
w_{a_k}^{j}(x) = \lambda_{a_k}^{\frac{3}{2}}u_{a_k}(\lambda_{a_k} x+x_j)
$$
defined in \eqref{eq:3.16} is bounded in $H^{\frac{1}{2}}(\mathbb{R}^3)$, and
\begin{equation}
w_{a_k}^{j}\rightarrow w_0^{j}\quad {\rm in}\quad L^2(\mathbb{R}^3)\nonumber
\end{equation}
as $k\to \infty$, where $w_0^{j}=\frac{\beta^3Q(\beta^2(x-y_0))}{\|Q\|_2}$. The proof of Theorem 1.2 will be completed once we determine $x_j$, $y_0$ and $\beta^2$. To this purpose, we will show that
\begin{align}\label{eq:3.34}
\liminf_{k\rightarrow \infty}\frac{e(a_k)}{(a^*-a_k)^{\frac{p}{p+1}}}
 \ge \frac{p+1}{p}\frac{\mu}{a^*}.
 \end{align}
Indeed, by \eqref{eq:3.17} and the Gagliardo-Nirenberg inequality,
\[
w_{a_k}^{j}\rightarrow w_0^{j}\quad {\rm in}\quad L^q(\mathbb{R}^3)
\]
as $k\to \infty$ for $2\le q<3$. It follows from Lemma \ref{lem:3.4} that
\begin{equation}\label{eq:3.27a}
\begin{split}
  e(a_k)&=E_{a_k}(u_{a_k})\\
  &\ge \frac{a^*-a_k}{2}\int_{\mathbb{R}^3}(|x|^{-1}*u_{a_k}^2)u_{a_k}^2\,dx
  +\int_{\mathbb{R}^3}Vu_{a_k}^2\,dx\\
  &=\frac{\lambda_{a_k}^p}{2}\int_{\mathbb{R}^3}\big(|x|^{-1}*(w_{a_k}^{j})^2\big)
  w_{a_k}^{j}(x)^2\,dx+\int_{\mathbb{R}^3}V(\lambda_{a_k}x+x_j)
  w_{a_k}^{j}(x)^2\,dx.\\
  \end{split}
\end{equation}
By  the Fatou lemma, we have
  \begin{align}\label{eq:3.35}
  &\liminf_{k\rightarrow\infty}\lambda_{a_k}^{-p}\int_{\mathbb{R}^3}V(\lambda_{a_k}x+x_j)
  w_{a_k}^{j}(x)^2\,dx\nonumber\\
  &=\liminf_{k\rightarrow\infty}\int_{\mathbb{R}^3}\int_{\mathbb{R}^3}
  \frac{V(\lambda_{a_k}x+x_j)}{|\lambda_{a_k}x|^p}|x|^pw_{a_k}^{j}(x)^2\,dx
  \nonumber\\
  &\ge \kappa \int_{\mathbb{R}^3}|x|^pw_0^{j}(x)^2\,dx,
  \end{align}
  where $\kappa$ is defined in \eqref{eq:1.13}. We write
\[
\int_{\mathbb{R}^3}|x|^pw_0^{j}(x)^2\,dx
  =\frac{1}{a^*\beta^{2p}}\int_{\mathbb{R}^3}|x+\beta^2y_0|^pQ^2(x)\,dx
 \]
and we claim that
\begin{equation}\label{eq:3.36}
\frac{1}{a^*\beta^{2p}}\int_{\mathbb{R}^3}|x+\beta^2y_0|^pQ^2(x)\,dx
\geq \frac{1}{a^*\beta^{2p}}\int_{\mathbb{R}^3}|x|^pQ^2(x)\,dx.
\end{equation}
The equality in \eqref{eq:3.36} holds if and only if $y_0=0$.
Indeed, since $Q$ is a radially symmetric and decreasing and
 \[
 \int_{\mathbb{R}^3}|x+\beta^2y_0|^pQ^2(x)\,dx
 =\int_{\mathbb{R}^3}|x-\beta^2y_0|^pQ^2(x)\,dx,
 \]
 we have
 \begin{equation*}
\begin{split}
&\quad\int_{\mathbb{R}^3}|x+\beta^2y_0|^pQ^2(x)\,dx-\int_{\mathbb{R}^3}|x|^pQ^2(x)\,dx\\
&=\frac12\int_{\mathbb{R}^3}(|x+\beta^2y_0|^p+|x-\beta^2y_0|^p-2|x|^p)Q^2(x)\,dx\\
&=\frac12\int_{\mathbb{R}^3}(|x+\beta^2y_0|^p-|x|^p)Q^2(x)\,dx+
\frac12\int_{\mathbb{R}^3}(|x-\beta^2y_0|^p-|x|^p)Q^2(x)\,dx\\
&=\frac12\int_{\mathbb{R}^3}(|x+\beta^2y_0|^p-|x|^p)Q^2(x)\,dx\\
&+\frac12\int_{\mathbb{R}^3}(|x|^p-|x+\beta^2y_0|^p)Q^2(x+\beta^2y_0)\,dx\\
&=\frac12\int_{\mathbb{R}^3}(|x+\beta^2y_0|^p-|x|^p)
\big(Q^2(x)-Q^2(x+\beta^2y_0)\big)\,dx\ge0.
\end{split}
 \end{equation*}

Therefore, by \eqref{eq:3.35}, \eqref{eq:3.36} and Lemmas \ref{lem:2.2} and \ref{lem:3.4} we find that
  \[
  \begin{split}
  &\liminf_{k \rightarrow \infty}\frac{e(a_k)}{(a^*-a_k)^{\frac{p}{p+1}}}\\
  \ge&\frac{1}{2}\int_{\mathbb{R}^3}\Big(|x|^{-1}*\big(w_0^{j}\big)^2\Big)w_0^{j}(x)^2\,dx
  +\frac{\kappa}{a^*\beta^{2p}}\int_{\mathbb{R}^3}|x|^pQ^2(x)\,dx\\
  =& \frac{\beta^2}{2(a^*)^2}\int_{\mathbb{R}^3}(|x|^{-1}*Q^2)Q^2\,dx+\frac{\kappa}{a^*\beta^{2p}}
  \int_{\mathbb{R}^3}|x|^pQ(x)^2\,dx\\
  \end{split}
  \]
Since
$$
\int_{\mathbb{R}^3}(|x|^{-1}*Q^2)Q^2\,dx=2\int_{\mathbb{R}^3}Q^2\,dx,
$$
we obtain
\begin{equation}
\liminf_{k \rightarrow \infty}\frac{e(a_k)}{(a^*-a_k)^{\frac{p}{p+1}}}
\geq\frac{1}{a^*}\Big(\beta^2+\frac{\kappa}{\beta^{2p}}\int_{\mathbb{R}^3}|x|^pQ(x)^2\,dx\Big).
 \nonumber\end{equation}
Take the minimum on the right hand side of the above inequality,
 it is achieved only at $\beta^2=\mu$, where $\mu$ is
defined in \eqref{eq:1.14}. Hence, \eqref{eq:3.34} holds true.

  Finally, we claim that  $x_j\in \mathcal{Z}$, $y_0=0$ and $\beta^2=\mu$. On the contrary, if any one of these cases does not happen, i.e. either $x_j\not\in\mathcal{Z}$, or  $y_0\neq0$, or $\beta^2\neq\mu$, it follows from \eqref{eq:3.34}, \eqref{eq:3.35}  and \eqref{eq:3.36} that
\[
\liminf_{k\rightarrow \infty}\frac{e(a_k)}{(a^*-a_k)^{\frac{p}{p+1}}}
 > \frac{p+1}{p}\frac{\mu}{a^*},
 \]
 which is a contradiction to \eqref{eq:3.9}. The proof of Theorem 2 is complete. $\Box$

 \bigskip

  \section{Appendix}

  \bigskip

In the appendix, we present some facts used in the sequel. For the completeness, we include proofs of these results.
 First, we have the following operator inequality.

  \begin{Lemma}\label{lem:A1} There holds
  \begin{equation*}
  \sqrt{-\Delta+R^{-2}m^2}\le \sqrt{-\Delta}+\frac{1}{2}R^{-2}m^2(-\Delta)^{-\frac{1}{2}}.
  \end{equation*}
  \end{Lemma}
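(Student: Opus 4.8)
The plan is to diagonalize both operators with the Fourier transform and reduce the operator inequality to an elementary inequality between the multipliers. Interpreting the inequality in the sense of quadratic forms, for $u\in H^{1/2}(\mathbb{R}^3)$ with $\int_{\mathbb{R}^3}|\xi|^{-1}|\mathcal{F}u(\xi)|^2\,d\xi<\infty$ (which covers every use of the lemma in this paper, since there it is applied to functions in $H^{1/2}(\mathbb{R}^3)$ whose $\dot H^{-1/2}$-seminorm is finite by Hardy--Littlewood--Sobolev), Plancherel's theorem gives
\[
\big(\sqrt{-\Delta+R^{-2}m^2}\,u,u\big)=\int_{\mathbb{R}^3}\sqrt{|\xi|^2+R^{-2}m^2}\,|\mathcal{F}u(\xi)|^2\,d\xi
\]
and
\[
\Big(\big(\sqrt{-\Delta}+\tfrac12 R^{-2}m^2(-\Delta)^{-1/2}\big)u,u\Big)=\int_{\mathbb{R}^3}\Big(|\xi|+\tfrac12 R^{-2}m^2|\xi|^{-1}\Big)|\mathcal{F}u(\xi)|^2\,d\xi .
\]
Hence it suffices to prove the pointwise bound $\sqrt{|\xi|^2+R^{-2}m^2}\le |\xi|+\tfrac12 R^{-2}m^2|\xi|^{-1}$ for every $\xi\neq 0$.

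To verify this, I would set $t=|\xi|^2>0$ and $\varepsilon=R^{-2}m^2\ge 0$ and show $\sqrt{t+\varepsilon}\le \sqrt t+\frac{\varepsilon}{2\sqrt t}$. This is precisely the tangent-line estimate coming from the concavity of $s\mapsto\sqrt s$ on $(0,\infty)$, namely $\sqrt{t+\varepsilon}\le \sqrt t+\varepsilon\cdot\tfrac{1}{2\sqrt t}$; equivalently, since both sides are nonnegative one may square and obtain the obviously true inequality $t+\varepsilon\le t+\varepsilon+\frac{\varepsilon^2}{4t}$. Substituting $t=|\xi|^2$ back and integrating against $|\mathcal{F}u(\xi)|^2\,d\xi$ yields the asserted form inequality.

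There is essentially no serious obstacle here; the only point requiring a word of care is the meaning of the unbounded Fourier multiplier $(-\Delta)^{-1/2}$, which is handled by reading the statement as a quadratic-form inequality on its natural domain and by recording that in each application of the lemma (for instance in the estimate of the term $E$ in Section~2 and in the bound leading to \eqref{eq:3.4}) the relevant right-hand side form is finite, again by Hardy--Littlewood--Sobolev.
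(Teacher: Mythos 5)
Your proof is correct and follows essentially the same route as the paper: apply Plancherel to reduce the operator inequality to the scalar estimate $\sqrt{|\xi|^2+R^{-2}m^2}\le |\xi|+\tfrac12 R^{-2}m^2|\xi|^{-1}$, which holds pointwise. The paper treats that scalar bound as obvious, whereas you additionally supply the tangent-line (or square-both-sides) justification and a remark on the form domain; otherwise the arguments coincide.
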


\begin{proof} For $\varphi\in C_c^\infty(\mathbb{R}^3)$, by the Plancherel theorem, we have
\[
\begin{split}
&\quad\int_{\mathbb{R}^3}\varphi\sqrt{-\Delta+R^{-2}m^2}\,\varphi\, dx\\
&=\int_{\mathbb{R}^3}\sqrt{|\xi|^2+m^2R^{-2}}(\mathcal{F}(\varphi)\big)^2\,d\xi\\
&\le\int_{\mathbb{R}^3}\big(|\xi|+\frac{1}{2}m^2R^{-2}|\xi|^{-1}\big)
\big(\mathcal{F}(\varphi)\big)^2\,d\xi\\
&=\int_{\mathbb{R}^3}\varphi \mathcal{F}^{-1}\big(|\xi|\mathcal{F}(\varphi)\big)\,dx
+\frac{1}{2}m^2R^{-2}\int_{\mathbb{R}^3}\varphi \mathcal{F}^{-1}
\big(|\xi|^{-1}\mathcal{F}(\varphi)\big)\,dx\\
&=\int_{\mathbb{R}^3}\varphi
\Big(\sqrt{-\Delta}+\frac{1}{2}m^2R^{-2}(-\Delta)^{-\frac{1}{2}}\Big)\varphi\, dx.
\end{split}
\]
The assertion follows.
\end{proof}

Next fact enable us to show the positivity of minimizers of problem $e(a)$.
 \begin{Lemma}\label{lem:A2}
 For any $u\in H^{\frac{1}{2}}(\mathbb{R}^3)$, we have
\begin{equation}\label{eq:a2}
 \int_{\mathbb{R}^3}|u|\sqrt{-\Delta+m^2}|u|\,dx\le \int_{\mathbb{R}^3}u\sqrt{-\Delta+m^2}\,u\,dx.
\end{equation}
 \end{Lemma}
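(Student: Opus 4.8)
The plan is to prove the pointwise/quadratic-form inequality \eqref{eq:a2} by passing to a heat-kernel (or resolvent) representation of the operator $\sqrt{-\Delta+m^2}$ in which the kernel is manifestly nonnegative, and then applying the elementary inequality $\big||u(x)|-|u(y)|\big|\le|u(x)-u(y)|$. First I would recall the subordination formula expressing the semigroup generated by $\sqrt{-\Delta+m^2}$ in terms of the heat semigroup: for $t>0$,
\[
e^{-t\sqrt{-\Delta+m^2}}=\int_0^\infty \frac{t}{2\sqrt{\pi}}\,s^{-3/2}e^{-t^2/(4s)}\,e^{-s(-\Delta+m^2)}\,ds,
\]
so that $e^{-t\sqrt{-\Delta+m^2}}$ has an integral kernel $k_t(x-y)>0$ that is a superposition of Gaussians times $e^{-sm^2}$. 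Consequently $\sqrt{-\Delta+m^2}$ admits, on the form domain $H^{1/2}(\mathbb{R}^3)$, the Dirichlet-form representation
\[
\int_{\mathbb{R}^3}u\,\sqrt{-\Delta+m^2}\,u\,dx
=\lim_{t\to0^+}\frac1t\Big(\int_{\mathbb{R}^3}u^2\,dx-\int_{\mathbb{R}^3}\!\!\int_{\mathbb{R}^3}k_t(x-y)u(x)u(y)\,dx\,dy\Big).
\]

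\textbf{Main steps.} With this representation in hand the argument is short. Since $\int k_t=e^{-tm}\le1$, one rewrites the quadratic form, for each fixed $t>0$, in the ``carré du champ'' form
\[
\frac1t\Big(\int u^2-\iint k_t(x-y)u(x)u(y)\Big)
=\frac{1}{2t}\iint k_t(x-y)\big(u(x)-u(y)\big)^2\,dx\,dy
+\frac{1-e^{-tm}}{t}\int u^2,
\]
and both terms on the right are monotone under replacing $u$ by $|u|$: the second is unchanged because $u^2=|u|^2$, and in the first we use $k_t\ge0$ together with the pointwise bound $\big(|u(x)|-|u(y)|\big)^2\le\big(u(x)-u(y)\big)^2$. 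Hence for every $t>0$ the approximating bilinear expression for $|u|$ is bounded above by that for $u$; letting $t\to0^+$ (the limits exist for $u\in H^{1/2}(\mathbb{R}^3)$ by the spectral theorem, with the value $+\infty$ allowed, which is harmless since the inequality is then trivial) yields \eqref{eq:a2}. Alternatively, and perhaps cleaner for the reader, one can bypass the limit by working directly with the singular-integral kernel of $\sqrt{-\Delta+m^2}$: there is a constant $c_3>0$ and a radial profile with $\sqrt{-\Delta+m^2}\,u(x)=c_3\,\mathrm{p.v.}\!\int (u(x)-u(y))\,J_m(|x-y|)\,dy + m\,u(x)$ with $J_m>0$, whence $\int u\sqrt{-\Delta+m^2}u\,dx=\frac{c_3}{2}\iint (u(x)-u(y))^2 J_m(|x-y|)\,dx\,dy+m\int u^2$, and the same two observations ($J_m\ge0$, $u^2=|u|^2$, $\big||u(x)|-|u(y)|\big|\le|u(x)-u(y)|$) finish it.

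\textbf{Expected obstacle.} The only delicate point is the \emph{justification of the nonnegative kernel representation} of the quadratic form on all of $H^{1/2}(\mathbb{R}^3)$ — i.e.\ identifying $\int u\sqrt{-\Delta+m^2}u\,dx$ (defined via the Fourier multiplier $\sqrt{|\xi|^2+m^2}$) with the Dirichlet/singular-integral form, including convergence of the principal value and the interchange of limits. This is standard (it follows from subordination and Fubini, or from the known explicit Bessel-type formula for the kernel of $\sqrt{-\Delta+m^2}$, cf.\ the references already cited in the paper for properties of this operator), so I would state it with a one-line justification via the subordination formula rather than reproving it. Everything after that representation is the elementary triangle-inequality observation and is immediate; in particular no regularity of $u$ beyond $u\in H^{1/2}$ is needed, since $|u|\in H^{1/2}$ as well and both sides are then finite, while if the right-hand side is infinite there is nothing to prove.
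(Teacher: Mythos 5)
Your proof is correct, and your ``alternative'' singular-integral route is in substance exactly what the paper does: the paper cites the closed-form kernel identity from Lieb--Loss (Theorem~7.12; see also Theorem~5 in \cite{A}),
\[
\int_{\mathbb{R}^3}u\sqrt{-\Delta+m^2}\,u\,dx
= m\int_{\mathbb{R}^3}|u|^2\,dx
+\Big(\frac{m}{2\pi}\Big)^2\int_{\mathbb{R}^3}\int_{\mathbb{R}^3}
\frac{|u(x)-u(y)|^2}{|x-y|^2}\,K_2(m|x-y|)\,dx\,dy,
\]
with $K_2>0$ a modified Bessel function, and then concludes via $\big||u(x)|-|u(y)|\big|\le|u(x)-u(y)|$. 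Your primary route --- the subordination formula together with the monotone approximation $\frac1t\langle u,(I-e^{-t\sqrt{-\Delta+m^2}})u\rangle\to\langle u,\sqrt{-\Delta+m^2}\,u\rangle$ --- reaches the same ``nonnegative jump kernel plus nonnegative mass'' structure without invoking the explicit Bessel formula; it trades a known closed form for a more self-contained but slightly more abstract derivation (positivity of the subordinated heat kernel plus a spectral-theorem limit). Both are valid, and the reduction to the elementary triangle inequality is identical. One small correction: since the operator depends only on $m^2$ while the paper allows $m\in\mathbb{R}$, the mass term in your carr\'e-du-champ decomposition should read $\frac{1-e^{-t|m|}}{t}\int u^2$ rather than $\frac{1-e^{-tm}}{t}\int u^2$ (note $\int k_t = e^{-t\sqrt{m^2}}=e^{-t|m|}$), so that it is manifestly nonnegative for all real $m$.
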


\begin{proof}
It is known from Theorem 7.12 in \cite{LL}, see also Theorem 5 in \cite{A} that, for any $u\in H^{\frac{1}{2}}(\mathbb{R}^3)$,
\[
\begin{split}
&\quad\int_{\mathbb{R}^3}u\sqrt{-\Delta+m^2}\,u\,dx\\
&=m\int_{\mathbb{R}^3}|u|^2\,dx
+\big(\frac{m}{2\pi}\big)^2\int_{\mathbb{R}^3}\int_{\mathbb{R}^3}\frac{|u(x)-u(y)|^2}{|x-y|^2}K_2(m|x-y|)\,dxdy,\\
\end{split}
\]
where
\[
K_2(m|x-y|)=\frac{\sqrt{\pi}{\rm e}^{-m|x-y|}}{\sqrt{2m|x-y|}\Gamma(\frac{5}{2})}\int_0^\infty{\rm e}^{-t}\Big(t+\frac{t^2}{2m|x-y|}\Big)^{\frac{3}{2}}\,dt.
\]
This, together with the inequality
$$
||u(x)|-|u(y)||\le|u(x)-u(y)|,
$$
implies \eqref{eq:a2}.
\end{proof}

Although there is no uniqueness results for solutions of \eqref{eq:1.7}, we show that every ground state has the
same $L^2$ norm.
\begin{Lemma}\label{lem:A3}
If $u$ is a nontrivial solution of \eqref{eq:1.7}, then $\|u\|_2\ge\|Q\|_2.$ In particular, every ground state $v$ of \eqref{eq:1.7} satisfies $\|v\|_2=\|Q\|_2$. If $u$ is a nontrivial solution of \eqref{eq:1.7} with $\|u\|_2=\|Q\|_2$, then $u$  is a ground state of \eqref{eq:1.7}.
\end{Lemma}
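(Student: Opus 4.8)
The plan is to extract from the equation two scaling identities and then feed them into the variational characterization \eqref{eq:1.9} of $\|Q\|_2^2$. Let $u$ be a nontrivial solution of \eqref{eq:1.7}, and write $T=\int_{\mathbb{R}^3}u\sqrt{-\Delta}u\,dx=\int_{\mathbb{R}^3}|(-\Delta)^{\frac14}u|^2\,dx$, $M=\int_{\mathbb{R}^3}u^2\,dx$, $W=\int_{\mathbb{R}^3}(|x|^{-1}*u^2)u^2\,dx$. Pairing the equation with $u$ gives the Nehari identity $T+M=W$. Differentiating $\lambda\mapsto J(u(\lambda\,\cdot))$ at $\lambda=1$, where $J(v)=\tfrac12\int v\sqrt{-\Delta}v\,dx+\tfrac12\int v^2\,dx-\tfrac14\int(|x|^{-1}*v^2)v^2\,dx$ is the action associated with \eqref{eq:1.7} and $u$ is a critical point of $J$, gives the Pohozaev identity $T+\tfrac32M=\tfrac54W$. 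To make this last computation rigorous for a general $H^{\frac12}$ solution I would first invoke the regularity and decay theory of \cite{FL1} (the same bootstrap that yields \eqref{eq:1.10} for $Q$) to ensure $u\in H^s(\mathbb{R}^3)$ for all $s\ge\frac12$ and $u(x)=O(|x|^{-4})$, and then apply the fractional Pohozaev identity exactly as in Lemma 5 of \cite{LenLew}, whose proof uses only this regularity and decay and not any minimality of $Q$. Combining the two identities yields
\[
T=M=\tfrac12 W,\qquad\text{i.e.}\qquad \int_{\mathbb{R}^3}|(-\Delta)^{\frac14}u|^2\,dx=\int_{\mathbb{R}^3}u^2\,dx=\tfrac12\int_{\mathbb{R}^3}(|x|^{-1}*u^2)u^2\,dx.
\]

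With these relations, the Weinstein functional \eqref{eq:1.8a} evaluated at $u$ collapses:
\[
I(u)=\frac{T\,M}{W}=\frac{M^2}{2M}=\frac{\|u\|_2^2}{2}.
\]
By \eqref{eq:1.9}, $I(u)\ge\inf_{v\not\equiv0}I(v)=\tfrac12\|Q\|_2^2$, hence $\|u\|_2^2\ge\|Q\|_2^2$, which is the first assertion.

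For the two remaining assertions I would use that, again by $T=M=\tfrac12W$, the action satisfies $J(u)=\tfrac12T+\tfrac12M-\tfrac14W=\tfrac12\|u\|_2^2$ on \emph{every} nontrivial solution $u$. By the variational characterization of ground states (a ground state being a nontrivial solution of least action, equivalently a rescaled minimizer of $I$, cf. \eqref{eq:1.9}), a ground state $v$ satisfies $J(v)\le J(Q)$, so $\|v\|_2\le\|Q\|_2$; together with the first assertion this forces $\|v\|_2=\|Q\|_2$. Conversely, if $u$ is a nontrivial solution with $\|u\|_2=\|Q\|_2$, then $J(u)=\tfrac12\|Q\|_2^2=J(Q)$, so $u$ attains the least action among nontrivial solutions and is therefore a ground state.

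The main obstacle is the rigorous derivation of the Pohozaev identity for an arbitrary $H^{\frac12}$ solution: since $\sqrt{-\Delta}$ is nonlocal, the usual integration by parts against $x\cdot\nabla u$ is unavailable, and one must instead exploit the full regularity and decay of $u$ — or pass to the Caffarelli–Silvestre harmonic extension — to justify the dilation argument. Once that identity is in hand, together with the Nehari identity and \eqref{eq:1.9} the rest is elementary.
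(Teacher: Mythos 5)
Your proposal is correct and follows essentially the same route as the paper: establish the identities $T=M=\tfrac12 W$ for every nontrivial $H^{1/2}$ solution, plug them into the Weinstein functional \eqref{eq:1.8a} to get $I(u)=\tfrac12\|u\|_2^2\ge\tfrac12\|Q\|_2^2$ via \eqref{eq:1.9}, and then observe that the action on solutions reduces to $\tfrac12\|u\|_2^2$ to characterize ground states. The only difference is cosmetic: the paper simply cites Lemma 5 of \cite{LenLew} for the identity $\int|(-\Delta)^{1/4}u|^2\,dx=\tfrac12\int(|x|^{-1}*u^2)u^2\,dx=\int u^2\,dx$, whereas you rederive it by combining the Nehari relation $T+M=W$ with the dilation Pohozaev relation $T+\tfrac32M=\tfrac54W$ and flag the regularity/decay needed to justify the latter — a useful expansion of what the citation is silently assuming, but not a different argument.
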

\begin{proof}
By Lemma 5 in \cite{LenLew},
\begin{align}\label{eq:4.3}
\int_{\mathbb{R}^3}\big|(-\Delta)^{\frac{1}{4}}u\big|^2dx
=\frac{1}{2}\int_{\mathbb{R}^3}(|x|^{-1}*u^2)u^2dx=\int_{\mathbb{R}^3}u^2dx.
\end{align}
Thus,
\[
\begin{split}
I(u)&=\frac{\int_{\mathbb{R}^3}\big|(-\Delta)^{\frac{1}{4}}u\big|^2dx
\int_{\mathbb{R}^3}u^2dx}
{\int_{\mathbb{R}^3}(|x|^{-1}*u^2)u^2dx}\\
&=\frac{1}{2}\int_{\mathbb{R}^3}u^2dx\\
&\ge\inf_{u\in H^{\frac{1}{2}}(\mathbb{R}^3),u\neq0}I(u)\\
&=\frac{1}{2}\int_{\mathbb{R}^3}Q^2dx.
\end{split}
\]
This implies $\|u\|_2\ge\|Q\|_2$.

On the other hand, if $u$ satisfies \eqref{eq:1.7} and $\|u\|_2=\|Q\|_2$, for any nontrivial solution $w$,  we have by \eqref{eq:4.3}
\[
\begin{split}
E(w)&=\frac{1}{2}\int_{\mathbb{R}^3}\big|(-\Delta)^{\frac{1}{4}}w\big|^2dx
+\frac{1}{2}\int_{\mathbb{R}^3}w^2dx-\frac{1}{4}(|x|^{-1}*w^2)w^2dx\\
&=\frac{1}{2}\int_{\mathbb{R}^3}w^2dx\\
&\ge\frac{1}{2}\int_{\mathbb{R}^3}Q^2dx\\
&=\frac{1}{2}\int_{\mathbb{R}^3}u^2dx\\
&=E(u).
\end{split}
\]
This means that $u$ is a ground state.
\end{proof}

\vspace{2mm} \noindent{\bf Acknowledgment}  Jianfu Yang  was supported by
NNSF of China, No:11271170, 11371254;  and GAN PO 555 program of Jiangxi.
Jinge Yang was supported by NNSF of China, No:11426130; and the Project of  Nanchang Institute of Technology, No:2014KJ020.

{\small
\end{document}